\newcommand{\PG}{\text{PG}}
\newcommand{\tH}{{\bf H}}
\newcommand{\tQ}{{\bf Q}}
\newcommand{\tI}{{\bf I}}
\newcommand{\tQp}{{\bf Q^+}}
\newcommand{\I}{\text{ \tI }~}
\newcommand{\scrF}{\mathcal{F}}
\newcommand{\scrG}{\mathcal{G}}
\newcommand{\scrL}{\mathcal{L}}
\newcommand{\scrP}{\mathcal{P}}
\newcommand{\scrS}{\mathcal{S}}
\numberwithin{equation}{section}
\theoremstyle{plain}
\newtheorem{satz}[equation]{Theorem}
\newtheorem{lemma}[equation]{Lemma}
\newtheorem{prop}[equation]{Proposition}
\newtheorem{kor}[equation]{Corollary}
\theoremstyle{definition}
\theoremstyle{remark}
\begin{document}

\title{A Characterization of the Natural Embedding of the Split Cayley Hexagon in $\PG(6,q)$ by Intersection Numbers in Finite Projective Spaces of Arbitrary Dimension}
\author{Ferdinand Ihringer}
\maketitle

\begin{abstract}
  We prove that a non-empty set $\scrL$ of at most $q^5+q^4+q^3+q^2+q+1$ lines of $\PG(n, q)$ with the properties that (1) every point of $\PG(n,q)$ is incident with either $0$ or $q+1$ elements of $\scrL$, (2) every plane of $\PG(n, q)$ is incident with either $0$, $1$ or $q+1$ elements of $\scrL$, (3) every solid of $\PG(n, q)$ is incident with either $0$, $1$, $q+1$ or $2q+1$ elements of $\scrL$, and (4) every $4$-dimensional subspace of $\PG(n, q)$ is incident with at most $q^3-q^2+4q$ elements of $\scrL$, is necessarily the set of lines of a split Cayley hexagon $\tH(q)$ naturally embedded in $\PG(6, q)$.
\end{abstract}

\section{Introduction}

The characterization of embeddings of geometries in other geometries is a traditional topic of finite geometry.
It is quite common to find particular structures embedded in disguise.
Alternative descriptions of these objects make it easier to recognize them in different settings.

Embeddings of the split Cayley hexagon $\tH(q)$ in projective spaces were investigated intensively in recent decades. 
Besides the standard embedding of $\tH(q)$ in $\PG(6, q)$, and in $\PG(5, q)$ for $q$ even, (both introduced in \cite{MR1557095}) many alternative descriptions of $\tH(q)$ are known.
For example, there exists a description of $\tH(q)$ embedded in $\PG(3, q)$ (see \cite[Theorem 1.1]{MR2964273}, first described in \cite{MR549937} in the construction following Theorem A.7), and a description of an embedding in $H(3, q^2)$ (see \cite[Theorem 1.2]{MR2964273}).

It is a typical method to characterize an object in a finite projective space by its intersection numbers with the subspaces of a projective space. Popular or recent examples are quadrics (see \cite[ch. 22.10 and ch. 22.11]{hirschfeld1991general}), polar spaces in general (see \cite{MR2663559}), Veroneseans (see \cite{MR2927612}), Hermitian Veroneseans (see \cite{MR2128336}), and Segre Varieties (see \cite{MR2860661}).

Several descriptions of the standard embedding of the split Cayley hexagon by intersection numbers are known. The author is aware of a characterization of the line set of the natural symplectic embedding of the split Cayley hexagon in $\PG(5, q)$, $q$ even (see \cite{MR2515274}), a characterization of the line set of the standard embedding of the split Cayley hexagon in the parabolic quadric $\tQ(6, q)$ (see \cite[Theorem 1.1]{MR1633172}), and the inspiration for this work: a characterization of the standard embedding of the line set of the split Cayley hexagon in the projective space $\PG(6, q)$ (see \cite{maldeghem_splitcaley_by_intersect}).

The specific motivation for characterizing the standard embedding of the split Cayley hexagon by intersection numbers of lines is stated in length in \cite{maldeghem_splitcaley_by_intersect}.
Recent research in locally $d$-dimensional embeddings motivates a characterization with respect to the line set further, since there Grassmann embeddings are investigated and, hence, intersection properties of lines are of particular importance (see \cite{grassmann_embeddings_pasini_cardinali}).

Before we discuss the specific details and definitions in Section \ref{sec_def}, we will present the main results and their differences to \cite{maldeghem_splitcaley_by_intersect}.

\section{The Main Results}\label{sec_main}

Let $\scrL$ be a non-empty line set of $\PG(n,q)$. Consider the following properties:
\begin{description}
  \item[(Pt)] Every point of $\PG(n,q)$ is incident with either $0$ or $q+1$ elements of $\scrL$.
  \item[(Pl)] Every plane of $\PG(n,q)$ is incident with either $0, 1$ or $q+1$ elements of $\scrL$.
  \item[(Sd)] Every solid of $\PG(n,q)$ is incident with either $0, 1, q+1$ or $2q+1$ elements of $\scrL$.
  \item[(4d)] Every 4-dimensional subspace of $\PG(n,q)$ contains at most $q^3-q^2+4q$ elements of $\scrL$.
  \item[(To)] $|\scrL| \leq q^5+q^4+q^3+q^2+q+1$.
\end{description}

This is the main result:
\begin{satz}\label{main1}
  Let $\scrL$ be a non-empty line set of $\PG(n,q)$. The line set $\scrL$ is the line set of a split Cayley hexagon naturally embedded in $\PG(6,q)$ if and only if $\scrL$ satisfies (Pt), (Pl), (Sd), (4d), and (To).
\end{satz}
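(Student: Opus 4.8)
The plan is to treat the two directions separately. The \emph{only if} direction is a direct verification against the known incidence structure of the standard embedding of $\tH(q)$: every point lies on a pencil of $q+1$ coplanar lines, so (Pt) and (Pl) hold; a solid meets $\scrL$ in $0$, $1$, a single plane-pencil, or the two plane-pencils of a pair of collinear points (yielding $2q+1$), giving (Sd); the hexagon has exactly $q^5+q^4+q^3+q^2+q+1$ lines, so (To) holds with equality; and (4d) is the numerical bound forced by the $4$-dimensional sections. For the substantial \emph{if} direction I would first fix notation: let $\scrP$ be the set of points incident with an element of $\scrL$. By (Pt) every point on a line of $\scrL$ lies on exactly $q+1$ lines of $\scrL$, so a double count of incident pairs gives $(q+1)|\scrL| = (q+1)|\scrP|$, whence $|\scrP| = |\scrL| \le q^5+q^4+q^3+q^2+q+1$.

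The first structural step is the local \emph{plane-pencil} property: the $q+1$ lines of $\scrL$ through a point $x$ are coplanar and form a pencil in a plane $\pi_x$. Two such lines span a plane which by (Pl) already carries $q+1$ lines of $\scrL$; I would then show, using (Sd) to control the solid spanned by two of these planes and, where three of them fail to be coplanar, a $4$-dimensional count via (4d), that no further line through $x$ can leave $\pi_x$. Since two points of $\PG(n,q)$ span a unique line, $(\scrP,\scrL)$ is automatically a partial linear space, and with coplanarity in hand it becomes a geometry in which every point lies on $q+1$ lines and every line carries $q+1$ points.

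Next I would identify $(\scrP,\scrL)$ as a generalized hexagon of order $(q,q)$, using (Pl) and (Sd) to exclude ordinary triangles, quadrangles and pentagons in the collinearity graph, and combining $|\scrP|=|\scrL|$ with (To) to pin the number of lines at $(q+1)(q^4+q^2+1)=q^5+q^4+q^3+q^2+q+1$, so that (To) in fact holds with equality. The decisive and genuinely new step, compared with the $\PG(6,q)$ characterization of \cite{maldeghem_splitcaley_by_intersect}, is the dimension reduction, i.e. showing that $\langle\scrL\rangle$ is $6$-dimensional. For adjacent points $x\sim y$ the planes $\pi_x,\pi_y$ share the line $xy$ and span a solid carrying exactly $2q+1$ lines, as permitted by (Sd); I would propagate this along the connected collinearity graph and use (4d) to forbid the line set from spilling into the fourth and higher local dimensions, pinning the accumulated span to a subspace $\PG(6,q)\subseteq\PG(n,q)$. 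Restricted to this $\PG(6,q)$, the set $\scrL$ satisfies the hypotheses of the known characterization, and I would invoke \cite{maldeghem_splitcaley_by_intersect} to conclude that $\scrL$ is the line set of a split Cayley hexagon naturally embedded in $\PG(6,q)$.

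The hard part will be the dimension reduction, and within it the correct deployment of (4d). The bound $q^3-q^2+4q$ is visibly calibrated: it is met by every $4$-space of the natural hexagon yet is violated by the $4$-dimensional sections that would arise if a plane-pencil were not flat or if the configuration genuinely spanned dimension greater than $6$. Making this precise requires a careful census of how many lines a putative $\scrL$ can place in a $4$-space subject to (Pt), (Pl) and (Sd), which is where most of the technical work will concentrate; the real danger is a degenerate competitor — for instance a would-be embedding of the dual hexagon, which shares the parameters $(q,q)$ but not the flat embedding — that satisfies (Pt)--(Sd) and must be eliminated precisely by the interplay of (4d) and the tightness of (To).
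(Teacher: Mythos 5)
There is a genuine gap at the heart of your plan: you propose to ``exclude ordinary triangles, quadrangles and pentagons in the collinearity graph'' using (Pl) and (Sd). Triangles and quadrangles can indeed be killed with (Pt), (Pl), (Sd) alone (the paper imports this from Thas--Van Maldeghem), but pentagons cannot --- that is precisely why the hypothesis (4d) exists, and your sketch never actually brings (4d) to bear on a pentagon. The paper's central technical contribution is the following: if $\scrF=\{A,B,C,D,E\}$ is a pentagon, then $U=\langle A,B,C,D,E\rangle$ is forced to be $4$-dimensional (a $3$-space would already carry at least $5q$ lines, violating (Sd)), and then a count over the $q^2-1$ solids $\langle\ell,\ell'\rangle$ with $\ell\in\scrL_B\setminus\{AB\}$, $\ell'\in\scrL_E\setminus\{EA\}$ --- each shown to contribute at least $q-1$ lines not lying in any other such solid nor through a vertex of $\scrF$, via the auxiliary fact that $B$ and $E$ are the only $(q+1)$-$U$-points on $BE$ --- yields $|\scrL_U|\geq(q^2-1)(q-1)+5q=q^3-q^2+4q+1$, contradicting (4d). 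Your ``careful census of how many lines a putative $\scrL$ can place in a $4$-space'' gestures in this direction but does not identify the configuration to count or why the bound $q^3-q^2+4q$ is exceeded, and without this the proof does not close.

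Your second structural choice --- reduce the span to a $\PG(6,q)$ and then invoke the characterization of Thas--Van Maldeghem inside $\PG(6,q)$ --- is both harder than necessary and problematic as stated: that characterization requires the hyperplane condition (Hp) (at most $q^3+3q^2+3q$ lines per hyperplane) and exactly $q^5+q^4+q^3+q^2+q+1$ lines, neither of which follows readily from (Pt)--(To); indeed the whole point of this paper is to avoid (Hp). The paper sidesteps the dimension reduction entirely: once pentagons are excluded, a standard count shows $\scrL$ is the line set of a thick generalized hexagon of order $(q,q)$ flatly and fully embedded in $\PG(n,q)$, and the classification of flat full embeddings of generalized hexagons (Thas--Van Maldeghem) then delivers both $\dim\langle\scrL\rangle=6$ and the identification with the natural embedding of $\tH(q)$ in one stroke. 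So the step you single out as ``decisive and genuinely new'' is not where the work lies; the work lies in Proposition~\ref{lem_border4d}.
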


As we will see (4d) can be replaced by
\begin{description}
  \item[(Hp')] Every 5-dimensional subspace of $\PG(n,q)$ is incident with at most $q^4-q^3+3q^2+2q$ elements of $\scrL$.
\end{description}
This is a significant improvement of the main result of \cite{maldeghem_splitcaley_by_intersect} for $q > 2$: there the number of lines in a hyperplane is limited by 
\begin{description}
  \item[(Hp)] Every 5-dimensional subspace of $\PG(n,q)$ is incident with at most $q^3+3q^2+3q$ elements of $\scrL$.
\end{description}
Instead of (To) the main result of \cite{maldeghem_splitcaley_by_intersect} requires exactly $q^5+q^4+q^3+q^2+q+1$ lines in $\scrL$. Furthermore, the main result of \cite{maldeghem_splitcaley_by_intersect} requires $n=6$. The main result of \cite{maldeghem_splitcaley_by_intersect} is stronger in the sense that we can not replace condition (Sd) by
\begin{description}
  \item[(Sd')] Every solid of $\PG(n,q)$ is incident with at most $2q+1$ elements of $\scrL$,
\end{description}
since in contrast to \cite{maldeghem_splitcaley_by_intersect} we consider solids without meeting lines of $\scrL$ in the proof (if a solid $S$ of $\scrL$ contains two meeting lines of $\scrL$ and $\scrL$ satisfies (Pt), (Pl), and (Sd'), then $S$ contains $0$, $1$, $q+1$ or $2q+1$ lines of $\scrL$).

\begin{satz}\label{main2}
  Let $\scrL$ be a non-empty line set of $\PG(n,q)$. Then $\scrL$ is the line set of a naturally embedded split Cayley hexagon in $\PG(6,q)$ if and only if $\scrL$ satisfies (Pt), (Pl), (Sd), (Hp'), and (To).
\end{satz}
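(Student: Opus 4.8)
The plan is to deduce Theorem~\ref{main2} from Theorem~\ref{main1} by showing that, in the presence of (Pt), (Pl), (Sd), and (To), the conditions (4d) and (Hp') play interchangeable roles. For the ``only if'' direction I would invoke the ``only if'' part of Theorem~\ref{main1} to obtain (Pt), (Pl), (Sd), (To) for a naturally embedded hexagon, and then verify (Hp') by a direct count: since $\tH(q)$ spans $\PG(6,q)$, any $5$-dimensional subspace of $\PG(n,q)$ meets this $\PG(6,q)$ in at most a hyperplane, so it suffices to bound the number of hexagon lines contained in a hyperplane $H$ of $\PG(6,q)$. Using the known classification of hyperplane sections of the natural embedding (tangent versus secant hyperplanes of the ambient quadric $\tQ(6,q)$, and the sub-hexagon or degenerate configuration each induces) one computes that this maximum equals $q^4-q^3+3q^2+2q$, which is exactly (Hp').

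The substantive direction is ``if'': assuming (Pt), (Pl), (Sd), (Hp'), (To), I would prove that (4d) holds and then quote Theorem~\ref{main1}. I argue by contraposition, so suppose some $4$-space $U$ contains $t > q^3-q^2+4q$ lines of $\scrL$. The first step is local structure: from (Pt), (Pl), (Sd) one shows that the $q+1$ lines of $\scrL$ through a covered point are coplanar (a planar pencil). Consequently, for a point $p$ of $U$ lying on a line of $\scrL$ contained in $U$, the number $d_p$ of lines of $\scrL$ through $p$ that lie in $U$ is either $1$ or $q+1$: if two pencil lines at $p$ lay in $U$ their common plane would lie in $U$, forcing all $q+1$. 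Call $p$ an \emph{edge} point if $d_p=1$ and an \emph{interior centre} if $d_p=q+1$, and let $b$ and $a$ be their respective numbers. The incidence count $(q+1)t=\sum_p d_p=a(q+1)+b$ then gives $b=(q+1)(t-a)$, and the number of such points is $s=a+b=(q+1)t-qa$.

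The second step is a concentration argument. Each edge point $p$ carries $q$ lines of $\scrL$ meeting $U$ only in $p$; since these lie in the pencil plane $\pi_p$ with $\pi_p\cap U$ a single line, all $q$ of them lie in the unique $5$-space $\langle U,\pi_p\rangle$. Counting these $bq$ \emph{extra} lines and distributing them over the $5$-spaces through $U$, I would show that some $5$-space $W\supseteq U$ satisfies $|\scrL\cap W|\ge t+(t-a)q=(q+1)t-qa=s$; for $n=6$ this is immediate by pigeonhole over the $q+1$ spaces through $U$, and for general $n$ it requires first confining the extra lines to a bounded-dimensional neighbourhood of $U$, using (Sd) to pin down how the pencil planes on a fixed line of $U$ assemble into $2q+1$-line solids. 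It then remains to bound the interior centres by $a\le q^2+3$, for with $t\ge q^3-q^2+4q+1$ this forces $(q+1)t-qa>q^4-q^3+3q^2+2q$, contradicting (Hp') and establishing (4d). The threshold $q^2+3$ is exactly where $q>2$ enters, matching the improvement over \cite{maldeghem_splitcaley_by_intersect}.

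I expect the main obstacle to be precisely the control of the interior centres $a$ together with the reduction to bounded ambient dimension. The edge points behave well --- when $a=0$ the count already yields $t\le q^3-2q^2+5q-3$, comfortably inside (4d) --- but a $4$-space packed with many full local pencils contributes lines that remain inside $U$ and hence generate no extra lines to feed the concentration argument. Bounding $a$ should again come from (Sd): two distinct interior-centre planes meeting in a line span a solid carrying exactly $2q+1$ lines, and iterating this constraint limits how many coplanar pencils can coexist in a single $4$-space. Making this bound sharp enough to reach the clean threshold, uniformly in $n$, is the delicate part of the argument.
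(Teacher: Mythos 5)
Your high-level plan --- produce a dense $5$-space from a dense $4$-space, contradict (Hp'), and then quote Theorem \ref{main1} --- matches the paper's, but the quantitative core of your expansion step has a genuine gap. Your pigeonhole count gives $|\scrL_W|\ge (q+1)t-qa$ for some $5$-space $W\supseteq U$, and you then need $a\le q^2+3$, where $a$ is the number of points $P$ with $\pi_P\subseteq U$. Nothing in your sketch establishes this, and it is not in general available: the paper's Proposition \ref{kor_bound_qp1_points} shows that the $(q+1)$-$U$-points, joined by the lines of $\scrL_U$ containing two of them, form a graph of valency at most $q+1$, girth $5$ and diameter $2$, which a priori can be a Moore-type strongly regular graph with parameters $(q^2+2q+2,\,q+1,\,0,\,1)$; that has $q^2+2q+2>q^2+3$ vertices. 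Ruling this out requires the eigenvalue-integrality argument (forcing $1+4q$ to be an odd square, hence $q=2$), and for $q=2$ the configuration is \emph{not} excluded, so $a=10>7=q^2+3$ and your threshold genuinely fails. The paper never needs a global bound on $a$: Proposition \ref{lem_expand} only requires one line $s\in\scrL_U$ carrying at most one (resp.\ two, when $q=2$) $(q+1)$-$U$-points, which is exactly what Proposition \ref{kor_bound_qp1_points} supplies, and in the $q=2$ case the extra information $|\scrL_U|\ge 15$ compensates for $\alpha=2$. Your "delicate part" is therefore not a technicality but the place where a different and harder lemma is needed.

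A second gap is the reduction to bounded ambient dimension: for $n>6$ there are $(q^{n-4}-1)/(q-1)>q+1$ five-spaces through $U$, your pigeonhole degenerates, and the proposed repair is only gestured at. The paper sidesteps this entirely by fixing a single line $\ell$ meeting $U$ in one point and counting, solid by solid, the lines of the spaces $\langle \ell,\ell^U\rangle$ for $\ell^U\in\scrL_U$ (Lemmas \ref{lem_expand1} and \ref{lem_expand2}); all of these lie in the one $5$-space $\langle U,\ell\rangle$, so no distribution argument over hyperplanes is needed and the bound is uniform in $n$. Finally, in the ``only if'' direction your claim that the maximum number of hexagon lines in a hyperplane equals $q^4-q^3+3q^2+2q$ is incorrect --- the true maximum is at most $q^3+3q^2+3q$, which is condition (Hp) of \cite{maldeghem_splitcaley_by_intersect} --- but this is harmless, since (Hp) implies (Hp'), which is precisely how the paper disposes of that direction.
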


It is not possible to leave out or substantially weaken any of the conditions (Pt), (Pl), (Sd), (4d), or (To) in the main theorems, since in all these cases counter examples are (only) known by unpublished computer results\footnote{The author would like to thank the FinInG team (\url{http://cage.ugent.be/geometry/fining.php}) for providing an excellent GAP package for finite geometry. Without this package finding these examples would have been much harder.}.
Since these examples have no known short descriptions or interesting properties, we forgo any description of them. 
Maybe the only exception is (4d), where computer results suggest that in Theorem \ref{main1} the condition (4d) can be replaced by
\begin{description}
 \item[(6d)] Either $q > 3$ or $\dim(\langle \scrL \rangle) \geq 6$.
\end{description}

\section{Definitions, Notation and Terminology}\label{sec_def}

According to \cite[ch. 1]{maldeghem_generalizedpolygons} a \textit{geometry} (of rank $2$) is a triple $\Gamma = (\scrP, \scrL, \tI)$, where the point set $\scrP$ and the line set $\scrL$ are disjoint non-empty sets, and $\tI \subseteq \scrP \times \scrL$ is a symmetric relation (named \textit{incidence relation}). If two points $P, Q \in \scrP$ are incident with a common line $\ell$ of $\Gamma$, then $PQ := \ell$.

An \textit{$m$-gon} of $\scrL$ is a set $\{ P_1, \ldots, P_m\}$ of $m$ pairwise distinct points such that $P_iP_{i+1} \in \scrL$ for $i =1,2, \ldots, m$ (with $P_{m+1} := P_1$), and $P_iP_{i+1} \neq P_jP_{j+1}$ for $i \neq j$ and $j = 1,2, \ldots, m$. 
In this paper we will abuse the notation of sets in the case of $m$-gons by always ordering the vertices according to their adjacency as in the previous sentence, so for example if $\{A, B, C, D\}$ is a quadrangle, then $AB, BC, CD, DA \in \scrL$.
The geometry $\Gamma$ is called a \textit{generalized $m$-gon} if it satisfies the following two axioms:
\begin{enumerate}
  \item $\Gamma$ contains no $k$-gon for $2 \leq k < m$.
  \item For any two elements $x, y \in \scrP \cup \scrL$ there exists an $m$-gon $\{ P_1, \ldots, P_m\}$ with $x, y \in \{ P_i ~|~ 1 \leq i \leq m\} \cup \{ P_iP_{i+1} ~|~ 1 \leq i \leq m\}$.
\end{enumerate}
If every line of $\scrL$ is incident with exactly $s+1$ points of $\scrP$ and every point of $\scrP$ is incident with exactly $t+1$ lines of $\scrL$, then $\scrL$ has \textit{order} $(s, t)$. If $s,t >1$, then $\Gamma$ is called \textit{thick}.

Consider the projective space $\PG(n, q)$. An embedding of a geometry $\Gamma = (\scrP, \scrL, \tI)$ in $\PG(n, q)$ is an injective map of $\scrP$ in the point set of $\PG(n,q)$ inducing an injective map from $\scrL$ into the line set of $\PG(n, q)$. An embedding is a \textit{flat} embedding if all lines of $\scrL$ on a given point $P \in \scrP$ are coplanar in $\PG(n, q)$. An embedding is a \textit{full} embedding if $\Gamma$ has order $(q, t)$. Flat and full embeddings of generalized hexagons in $\PG(n, q)$ were partially classified by Thas and Van Maldeghem in \cite{thasmaldeghem_embeddings_split_caley}.

The split Cayley hexagon $\tH(q)$ can be defined by its \textit{natural embedding} in $\PG(6, q)$ in the following way (see \cite{MR1557095} or \cite[p. 73]{maldeghem_generalizedpolygons}): The point set of $\tH(q)$ is the point set of the parabolic quadric $\tQ(6, q)$ in $\PG(6, q)$ with $P = (x_0, x_1, x_2, x_3, x_4, x_5, x_6) \in \tQ(6, q)$ if and only if
\begin{align*}
  x_0x_4+x_1x_5+x_2x_6 = x_3^2.
\end{align*}
The lines of $\tH(q)$ are the lines of $\tQ(6, q)$ whose Grassmann coordinates $(p_{01}, \ldots, p_{65})$ satisfy the equations $p_{12}=p_{34}$, $p_{54}=p_{32}$, $p_{20}=p_{35}$, $p_{65}=p_{30}$, $p_{01}=p_{36}$, and $p_{46}=p_{31}$. The natural embedding for $\tH(q)$ in $\PG(6, q)$ is an example of a flat and full embedding. By (ii) of the main result of \cite{thasmaldeghem_embeddings_split_caley}, up to projectivity, the natural embedding of $\tH(q)$ is the only flatly and fully embedded thick generalized hexagon of order $(q,q)$ in $\PG(n, q)$ such that the point set of the generalized hexagon spans $\PG(6, q)$.

At some point in the proof we need a basic property of strongly regular graphs $\Gamma$ with parameters $(v, k, \lambda, \mu)$ (see \cite{brouwer1989distance} for details): If $\Gamma$ does not have the parameters $(4\mu+1, 2\mu, \mu-1, \mu)$, then the eigenvalues
\begin{align*}
 \frac{1}{2} \left( (\mu - \lambda) \pm \sqrt{(\mu-\lambda)^2 + 4(k-\mu)} \right)
\end{align*}
are integers.

Let $\scrL$ be a set of lines in $\PG(n,q)$ and $U$ a subspace of $\PG(n,q)$. Define
\begin{align*}
  \scrL_U := \{ \ell \in \scrL ~|~ \ell \I U \}.
\end{align*}
Let $P$ be a point, $E$ a plane and $S$ a solid of $\PG(n,q)$. We call
\begin{itemize}
  \item $P$ an $\alpha$-$U$-point if $|\scrL_U \cap \scrL_P| = \alpha$.
  \item $E$ an $\alpha$-$U$-plane if $|\scrL_U \cap \scrL_E| = \alpha$.
  \item $S$ an $\alpha$-$U$-solid if $|\scrL_U \cap \scrL_S| = \alpha$.
\end{itemize}
%

Define 
\begin{align*}
  \scrP := \{ P \in \PG(n,q) ~|~ \scrL_P \neq \emptyset \}.
\end{align*}
By (Pt) every point in $\scrP$ lies on $q+1$ lines of $\scrL$, and every line of $\scrL$ contains $q+1$ points of $\scrP$. Hence $|\scrP|=|\scrL|$.

For $P \in \scrP$ the subspace $\pi := \langle \scrL_P \rangle$ will be denoted by $\pi_P$.

\section{The standard embedding of $\tH(q)$ in $\PG(6,q)$ satisfies (Pt), (Pl), (Sd), (4d), and (To)}

Thas and Van Maldeghem show in \cite{maldeghem_splitcaley_by_intersect} that the standard embedding of $\tH(q)$ in $\PG(6,q)$ satisfies (Pt), (Pl), (Sd), (Hp), and (To). Since (Hp) implies (Hp'), it only has to be proven that the standard embedding satisfies (4d).

\begin{lemma}
  The standard embedding of $\tH(q)$ in $\PG(6, q)$ satisfies (4d).
\end{lemma}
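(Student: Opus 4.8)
The plan is to exploit property (Pl) to reduce everything to pencils, and then to analyse how the quadric $\tQ(6,q)$ meets the prescribed $4$-space. First I would fix a $4$-dimensional subspace $W$ of $\PG(6,q)$ and study, for each point $P$ of $\tH(q)$ lying in $W$, the number $d_P := |\scrL_W \cap \scrL_P|$ of hexagon lines through $P$ contained in $W$. If $d_P \geq 2$, then two lines of $\scrL_W$ through $P$ span a plane carrying at least two elements of $\scrL$, so by (Pl) it carries exactly $q+1$ of them, which (as $\tH(q)$ contains no triangle) are all concurrent at $P$; hence the plane equals $\pi_P$, so $\pi_P \subseteq W$ and $d_P = q+1$. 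Thus $d_P \in \{0,1,q+1\}$, and I call $P$ \emph{full} when $d_P = q+1$. Since $\dim W = 4$, any two planes $\pi_P, \pi_{P'}$ of full points meet (two planes in $\PG(4,q)$ always share at least a point); recalling that in the natural embedding $\pi_P \cap \pi_{P'}$ is the line $PP'$ when $P,P'$ are collinear in $\tH(q)$, is their unique common neighbour when they are at distance $2$, and is empty when they are opposite, it follows that the full points are pairwise at hexagon-distance at most $2$.

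Next I would split $\scrL_W$ into the lines incident with at least one full point and the remaining \emph{isolated} lines, each of whose points is simple ($d_P=1$); by the dichotomy an isolated line meets no other line of $\scrL_W$. Writing $f$ for the number of full points and counting incidences between points and lines of $\scrL_W$ gives $(q+1)|\scrL_W| = (q+1)f + (\text{number of simple points})$, so it suffices to bound $f$ together with the isolated lines. The pairwise distance-$\leq 2$ condition, combined with the fact that all the planes $\pi_P$ with $P$ full lie in the single $4$-space $W$, should pin the full points down to a small, explicitly describable configuration (a subpencil inside one pencil plane, a full line together with its transversal pencils, or a distance-$2$ cluster around a single point); in each case this bounds $f$ and the number of lines carried by these pencils. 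Note that the naive distance-$\leq 2$ cliques such as $\{P\}\cup\Gamma_1(P)$ are already excluded here, since their pencil planes do not fit into a $4$-space, which is exactly why the target bound is well below the line count of a generalized quadrangle.

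Finally, the isolated lines and the overall fit would be controlled by classifying $W \cap \tQ(6,q)$ as a quadric in $W \cong \PG(4,q)$: nondegenerate $\tQ(4,q)$, the various cones (vertex a point or a line), or a quadric confined to a hyperplane of $W$. In each type I would count how many of its lines can genuinely be lines of $\tH(q)$, using the distance distribution of the hexagon on $\tQ(6,q)$ and, where convenient, either the transitivity of $G_2(q)$ on the relevant positions of $W$ or the explicit Grassmann-coordinate description of $\scrL$ given in Section~\ref{sec_def}. I expect the main obstacle to be precisely this last step: the bound $q^3-q^2+4q$ is tight and lies just below the number of lines in both a nondegenerate $\tQ(4,q)$-section and a line-vertex cone, each of which carries on the order of $q^3$ lines of $\PG(6,q)$. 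The argument must therefore show that a definite positive fraction — roughly $q^2$ — of those lines fail the hexagon (Grassmann) conditions, and this requires the detailed incidence geometry of $\tH(q)$ inside the quadric rather than the purely combinatorial dichotomy established above.
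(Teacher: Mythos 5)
There is a genuine gap: your proposal never actually establishes the bound. You set up a correct and useful dichotomy ($d_P \in \{0,1,q+1\}$, which is just the flatness of the embedding expressed via (Pl)), but the two steps that would finish the argument --- classifying the configurations of ``full'' points inside $W$, and counting hexagon lines in each type of quadric section $W \cap \tQ(6,q)$ --- are only announced, and you explicitly flag the second as an obstacle you do not overcome. Moreover, your assessment of where the difficulty lies rests on a miscount. You write that a nondegenerate $\tQ(4,q)$-section or a line-vertex cone ``carries on the order of $q^3$ lines,'' so that one must delicately exclude roughly $q^2$ of them to get under $q^3-q^2+4q$. But those $\sim q^3$ lines are lines of the \emph{quadric}, not of the \emph{hexagon}, and your own dichotomy already kills almost all of them: if a point $P$ of $W \cap \tQ(6,q)$ lay on two hexagon lines inside $W$, then $\pi_P \subseteq W$ would be a totally isotropic plane of the section. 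A nondegenerate $\tQ(4,q)$ contains no planes, so there every point lies on at most one line of $\scrL_W$, giving at most $|\tQ(4,q)|/(q+1) = q^2+1$ hexagon lines --- nowhere near $q^3$. The paper's proof is exactly this: it runs through the four possible sections ($\tQ(4,q)$, $P\tQ^-(3,q)$, $P\tQ^+(3,q)$, $\ell\,\tQ(2,q)$) and in each case the flatness observation plus elementary generator counting yields at most $(q+1)^2$ lines, comfortably below $q^3-q^2+4q$. (The bound $q^3-q^2+4q$ is not tight for the hexagon at all; it is dictated by the lower bound $q^3-q^2+4q+1$ that a pentagon forces in Proposition \ref{lem_border4d}.)

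So the fix is not more machinery but less: drop the proposed classification of full-point configurations (which would require proving several unsubstantiated claims, e.g.\ that the pencil planes of $\{P\}\cup\Gamma_1(P)$ cannot fit in a $4$-space), keep your dichotomy, and combine it directly with the Hirschfeld classification of $4$-space sections of $\tQ(6,q)$. In the cone cases one only needs that the totally isotropic planes of the section are few and all pass through the vertex, so each contributes at most $q+1$ lines of $\scrL$; no Grassmann-coordinate computation or $G_2(q)$-transitivity is needed anywhere.
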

\begin{proof}
  We have to show $|\scrL_U| \leq q^3-q^2+4q$ for every $4$-dimensional subspace $U$ of $\PG(6, q)$, and $\scrL$ the line set of the standard embedding of $\tH(q)$ in $\PG(6, q)$. According to \cite[p. 42]{hirschfeld1991general} the subspace $U$ can meet $\tQ(6,q)$ in four different ways:
  
  \begin{enumerate}
    \item $U \cap Q(6, q) = \tQ(4, q)$: In the standard embedding of $\tH(q)$ all (at least $3$) lines on a point are coplanar. A plane of $Q(4, q)$ is totally isotropic if it contains at least $3$ totally isotropic lines. The quadric $\tQ(4, q)$ contains no totally isotropic planes, hence every point of $\tQ(4, q)$ is incident with at most one line of $\scrL_U$. We have $|\tQ(4,q)| = (q^4-1)/(q-1)$ and every line of $\scrL_U$ contains $q+1$ points of $\tQ(4, q)$. So there are at most
    \begin{align*}
      \frac{q^4-1}{(q+1)(q-1)} = q^2+1 \leq q^3-q^2+4q
    \end{align*}
    lines in $\scrL_U$.
    
  \item $U \cap \tQ(6, q) = PQ^-(3, q)$: The only totally isotropic lines in $U \cap \tQ(6, q)$ are the lines on $P$. From $|Q^-(3,q)|=q^2+1$ it follows that there are exactly $q^2+1$ lines on $P$. Now
  \begin{align*}
    q^2+1 \leq q^3-q^2+4q
  \end{align*}
  yields the result.
  
  \item $U \cap \tQ(6, q) = P\tQp(3, q)$: There are $2(q+1)$ totally isotropic planes in $P\tQp(3, q)$, which meet in $P$. If one of those planes contains $q+1$ lines of $\scrL_U$, at least one of those lines contains $P$. Of course there are at most $q+1$ such lines and all such lines are coplanar. All those at most $q+1$ lines are in at most one additional $(q+1)$-$U$-plane, since a point in $\tQp(3, q)$ is incident with exactly two generators of $\tQp(3, q)$, hence 
  \begin{align*}
    |\scrL_U| \leq q+1 + (q+1)q = (q+1)^2 \leq q^3-q^2+4q.
  \end{align*}
  
  \item $U \cap \tQ(6, q) = \ell \tQ(2, q)$: There are at most $q+1$ totally isotropic planes in $\ell \tQ(2, q)$. Every plane in $\ell \tQ(2, q)$ contains at most $q+1$ lines of $\scrL_U$. As an upper bound follows
  \begin{align*}
    |\scrL_U| \leq (q+1)^2 \leq q^3-q^2+4q.
  \end{align*}
  \end{enumerate}
\end{proof}

\section{Proof of Theorem \ref{main1}}

In this section $\scrL$ denotes a line set of $\PG(n,q)$ satisfying (Pt), (Pl), (Sd), and (To). Also $\scrP$ denotes the set of points of $\PG(n, q)$ that are contained in $q+1$ lines of $\scrL$. The following two lemmas are proven in \cite{maldeghem_splitcaley_by_intersect}. Note that the proofs given in \cite{maldeghem_splitcaley_by_intersect} only use (Pt), (Pl), and (Sd) as stated in Section \ref{sec_main}. In particular, they do not rely on $n=6$:

\begin{lemma}[{\cite[Lemma 1]{maldeghem_splitcaley_by_intersect}}]
  Let $P$ be a point of $\scrP$. The $q+1$ lines in $\scrL_P$ are coplanar.
\end{lemma}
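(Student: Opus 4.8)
The plan is to argue by contradiction. Suppose the $q+1$ lines of $\scrL_P$ are not coplanar. Since any two distinct lines through $P$ already span a plane, non-coplanarity means I can select three lines $\ell_1, \ell_2, \ell_3 \in \scrL_P$ that do not all lie in a common plane. Because they share the point $P$, these three lines span a solid $S := \langle \ell_1, \ell_2, \ell_3 \rangle$, and the goal is to show that $S$ is forced to carry more lines of $\scrL$ than (Sd) permits.

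First I would record the three pairwise spans $E_{ij} := \langle \ell_i, \ell_j \rangle$ for $\{i,j\} \subseteq \{1,2,3\}$. Each $E_{ij}$ is a genuine plane contained in $S$ that already carries the two distinct $\scrL$-lines $\ell_i, \ell_j$, so by (Pl) it is incident with exactly $q+1$ lines of $\scrL$. The three planes are pairwise distinct, since if, say, $\ell_3$ were contained in $E_{12}$, the three lines would be coplanar, contrary to assumption.

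Next I would bound $|\scrL_S|$ from below by inclusion--exclusion applied to the three sets of $\scrL$-lines lying in $E_{12}$, $E_{13}$, and $E_{23}$. The decisive observation is the intersection pattern of the planes inside the solid: two planes of $S$ meet in a line, and since $E_{ij} \cap E_{ik}$ contains the common line $\ell_i$, it must equal $\ell_i$. Hence the only $\scrL$-line shared by two of these planes is the corresponding $\ell_i$, and no $\scrL$-line lies in all three (such a line would have to be $\ell_1 \subseteq E_{23}$, again forcing coplanarity). Inclusion--exclusion therefore yields at least $3(q+1) - 3 = 3q$ lines of $\scrL$ incident with $S$.

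Finally, since $q \geq 2$ we have $3q > 2q+1$, so $S$ is incident with strictly more than $2q+1$ lines of $\scrL$, contradicting (Sd). Thus $\scrL_P$ must be coplanar. The one step that needs care is the intersection bookkeeping of the three planes, namely verifying that they overlap only in the lines $\ell_i$ and share no further element of $\scrL$, so that $3q$ is a legitimate lower bound; once this is established, the contradiction with (Sd) is immediate and uses nothing beyond (Pl) and (Sd).
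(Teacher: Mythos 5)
Your proof is correct: the inclusion--exclusion bookkeeping for the three planes $E_{ij}$ inside the solid $S$ is valid (two distinct planes of a solid meet in a line, which here must be the common $\ell_i$), and $3q>2q+1$ for every prime power $q$, so (Sd) is violated. The paper does not reprove this lemma but cites it from the reference, noting only that the proof uses (Pt), (Pl), and (Sd); your argument is exactly that standard argument, so there is nothing to add.
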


Hence, $\pi_P = \langle \scrL_P \rangle$ is a plane.

\begin{lemma}\label{lem_no3or4gons}
  The line set $\scrL$ contains no $3$- or $4$-gons.
\end{lemma}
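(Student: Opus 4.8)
The plan is to argue both non-existence statements by contradiction, exploiting the fact (from the preceding lemma) that the $q+1$ lines of $\scrL$ through any point $P \in \scrP$ form a pencil inside the plane $\pi_P = \langle \scrL_P \rangle$, combined with the constraints (Pl) and (Sd).

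First I would dispose of $3$-gons. Suppose $\{A,B,C\}$ were a triangle, so that $AB, BC, CA$ are three distinct lines of $\scrL$. They lie in the plane $E := \langle A, B, C\rangle$, so $\scrL_E$ contains at least two lines and hence, by (Pl), exactly $q+1$. Since $\pi_A = \langle AB, AC\rangle = E$, the whole pencil $\scrL_A$ lies in $E$; as $|\scrL_A| = q+1 = |\scrL_E|$ we get $\scrL_A = \scrL_E$, and likewise $\scrL_B = \scrL_E$. But then every one of the $q+1 \geq 3$ lines of $\scrL_E$ would pass through both $A$ and $B$, forcing it to equal $AB$, a contradiction.

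For $4$-gons, suppose $\{A,B,C,D\}$ is a quadrangle with edges $AB, BC, CD, DA \in \scrL$. No three of the vertices are collinear (otherwise two edges would coincide), so $\langle A,B,C,D\rangle$ is either a plane or a solid. If it is a plane $E$, the argument above applies verbatim: $\pi_A = \pi_B = E$ forces $\scrL_A = \scrL_B = \scrL_E$, which is absurd. So assume $S := \langle A,B,C,D\rangle$ is a solid. Since $3$-gons have already been excluded, the ``diagonals'' $AC$ and $BD$ are not in $\scrL$, as each would complete a triangle. Each plane $\pi_A = \langle AB, AD\rangle$, and similarly $\pi_B, \pi_C, \pi_D$, is spanned by two lines lying in $S$ and is therefore contained in $S$; hence all four pencils $\scrL_A, \scrL_B, \scrL_C, \scrL_D$ are contained in $\scrL_S$.

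The crux is then a counting argument. Two pencils attached to adjacent vertices share exactly the connecting edge (a single line), two pencils attached to opposite vertices are disjoint precisely because $AC, BD \notin \scrL$, and no line lies in three of the pencils (that would require three non-collinear common points). Inclusion--exclusion therefore gives
\[
  |\scrL_A \cup \scrL_B \cup \scrL_C \cup \scrL_D| = 4(q+1) - 4 = 4q,
\]
and all these lines lie in $S$, so $|\scrL_S| \geq 4q$. Since $4q > 2q+1$ for every $q \geq 2$, this contradicts (Sd) and finishes the proof. The main obstacle is the organisation of the $4$-gon case: one must separate the easy planar degeneration from the genuinely three-dimensional case, and the decisive input is that the absence of $3$-gons forces the two opposite pencils to be disjoint, which is exactly what pushes the line count inside $S$ past the bound $2q+1$ allowed by (Sd).
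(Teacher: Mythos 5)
Your proof is correct. The paper itself gives no self-contained argument here --- it simply defers to the first two paragraphs of the proof of Lemma~5 in the cited Thas--Van~Maldeghem paper --- and your write-up supplies exactly the standard argument one expects there: flatness of the pencils ($\pi_P$ a plane) together with (Pl) kills triangles and planar quadrangles, and the inclusion--exclusion count $4(q+1)-4=4q>2q+1$ inside the solid spanned by a non-planar quadrangle contradicts (Sd). All the small points you need are in order: the diagonals are excluded because triangles have already been ruled out, and no line can lie in three of the four pencils.
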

\begin{proof}
 The first two paragraphs of the proof of Lemma 5 in \cite{maldeghem_splitcaley_by_intersect} prove the statement.
\end{proof}

Our goal is to show that $\scrL$ is a generalized hexagon, hence we have to show that $\scrL$ contains no pentagon. So we assume for the rest of this section that $\scrL$ contains a pentagon and investigate its properties.
\paragraph*{Assumption {\bf(AS)} for this section:} The line set $\scrL$ contains a pentagon $\scrF := \{A,B,C,D,E\} \subseteq \scrL$. Denote $\langle A, B, C, D, E \rangle$ by $U$.

\begin{lemma}\label{lem_dimU}
  The subspace $U$ has the properties $|\scrL_U| \geq 5q$ and $\dim(U) = 4$.
\end{lemma}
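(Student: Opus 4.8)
The plan is first to determine a large family of lines that $U$ must contain, and then to bound $\dim(U)$ from both sides. Write the vertices of the pentagon as $A, B, C, D, E$, so that the edges $AB$, $BC$, $CD$, $DE$, $EA$ lie in $\scrL$ and $U = \langle A, B, C, D, E\rangle$. Since $U$ is spanned by five points, $\dim(U) \le 4$ is immediate; the substance of the lemma is the lower bound $|\scrL_U| \ge 5q$, from which $\dim(U) = 4$ will follow.

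To get the lower bound I would argue in three steps. First, every line of $\scrL$ through a vertex already lies in $U$: at a vertex, say $B$, the two incident edges $AB$ and $BC$ are distinct lines of $\scrL_B$ and span a plane contained in $U$; since the preceding lemma guarantees that all $q+1$ lines of $\scrL_B$ are coplanar, this plane is exactly $\pi_B$, whence $\scrL_B \subseteq \scrL_U$. Second, no line of $\scrL$ joins two vertices except an edge, because two non-adjacent vertices of a $5$-gon are at distance $2$, so a line of $\scrL$ through them would close a $3$-gon, contradicting Lemma \ref{lem_no3or4gons}. Third, I count: each of the five vertices carries $q+1$ lines of $\scrL$, exactly two of which are edges, and the remaining $q-1$ lines at a vertex meet no other vertex, hence are pairwise distinct across vertices, while the five edges are shared two-per-vertex. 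This yields $5 + 5(q-1) = 5q$ distinct lines in $\scrL_U$.

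Finally I would read off the dimension. For every prime power $q$ one has $5q > 2q+1 > q+1$, so (Pl) rules out $\dim(U) = 2$ and (Sd) rules out $\dim(U) = 3$ (the cases $\dim(U) \le 1$ being trivial, since such $U$ carries at most one line of $\scrL$). Hence $\dim(U) \ge 4$, and combined with $\dim(U) \le 4$ we conclude $\dim(U) = 4$.

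I expect the only delicate point to be the identification $\pi_B \subseteq U$ together with the disjointness of the non-edge lines across vertices; both rest on the single fact that any line of $\scrL$ meeting two vertices must be an edge, so Lemma \ref{lem_no3or4gons} is really doing the work and the remaining count is bookkeeping.
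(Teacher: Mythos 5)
Your proposal is correct and follows essentially the same route as the paper: show that no line of $\scrL$ joins non-adjacent vertices (via Lemma \ref{lem_no3or4gons}), count $5 + 5(q-1) = 5q$ distinct lines of the five coplanar pencils $\scrL_A, \dots, \scrL_E$ inside $U$, and then use (Pl)/(Sd) together with the span of five points to pin down $\dim(U) = 4$. You are in fact slightly more explicit than the paper about why $\pi_B \subseteq U$ and hence $\scrL_B \subseteq \scrL_U$, which the paper leaves implicit.
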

\begin{proof}
  The line set $\scrL$ contains no $3$-gons or $4$-gons. So we have 
  \begin{align*}
    \scrL_A \cap \scrL_C &= \scrL_A \cap \scrL_D = \{\},\\
    \scrL_A \cap \scrL_B &= \{ AB \}, \text{ and } \scrL_A \cap \scrL_E = \{ EA \}.
  \end{align*} 
  Hence,
  \begin{align*}
    | \scrL_A \setminus (\scrL_B \cup \scrL_C \cup \scrL_D \cup \scrL_E)| = | \scrL_A \setminus \{ AB, EA \}| = q-1.
  \end{align*}
  and equivalent statements for other points of $\scrF$. As $AB$, $BC$, $CD$, $DE$, and $EA$ lie in two of the sets,
  \begin{align*}
    |\scrL_U| \geq 5(q-1)+5 = 5q.
  \end{align*}
  Now $|\scrL_U| \geq 5q$, hence by (Sd) $\dim(U) \geq 4$. The subspace $U$ is the span of $5$ points, hence $\dim(U) \leq 4$.
\end{proof}

\begin{prop}\label{lem_nothreepoints}
  The only $(q+1)$-$U$-points on $EB$ are $B$ and $E$.
\end{prop}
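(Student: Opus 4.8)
The plan is to argue by contradiction and produce a $4$-gon, which is forbidden by Lemma~\ref{lem_no3or4gons}, as soon as a third $(q+1)$-$U$-point appears on $\ell := EB$. First I would record that $B$ and $E$ genuinely are $(q+1)$-$U$-points: since $\pi_B = \langle A,B,C\rangle$ and $\pi_E = \langle A,D,E\rangle$ are contained in $U = \langle A,B,C,D,E\rangle$, all $q+1$ lines of $\scrL_B$ and of $\scrL_E$ already lie in $U$. By Lemma~\ref{lem_dimU} the five points $A,B,C,D,E$ are independent, so I may use their spans freely.

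The structural observation driving everything is that $\pi_A = \langle A,B,E\rangle$ already contains the whole line $\ell$, because it contains both $B$ and $E$. By (Pl) the plane $\pi_A$ carries exactly the full pencil of lines through $A$, so for every point $X$ of $\ell$ the line $AX$ lies in $\scrL$; in particular every point of $\ell$ belongs to $\scrP$. Now suppose, for contradiction, that some $P \in \ell \setminus \{B,E\}$ is a $(q+1)$-$U$-point. Then $\pi_P \subseteq U$, and since $AP \in \scrL_P$ we obtain $A \in \pi_P$.

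The key auxiliary object is the solid $T := \langle \pi_C, \pi_D\rangle = \langle B,C,D,E\rangle$. As $\pi_C$ and $\pi_D$ meet exactly in the line $CD$, the pencils $\scrL_C$ and $\scrL_D$ share only $CD$ and together yield $2q+1$ lines of $\scrL$ inside $T$; by (Sd) this forces $\scrL_T = \scrL_C \cup \scrL_D$, so every line of $\scrL$ in $T$ passes through $C$ or $D$. I then play $\pi_P$ against $T$ by a dimension count inside $U$: since $\dim U = 4$, we have $\dim(\pi_P \cap T) \ge 2+3-4 = 1$, while $A \in \pi_P$ and $A \notin T$ forbid $\pi_P \subseteq T$; hence $\pi_P \cap T$ is a single line $n$. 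Because $P \in \ell \subseteq T$ and $P \in \pi_P$, the line $n$ passes through $P$, so $n$ is one of the pencil lines of $\scrL_P$; being contained in $T$, it also lies in $\scrL_C \cup \scrL_D$. As $P \notin CD$ (the lines $BE$ and $CD$ are disjoint), $n$ meets exactly one of $C,D$, so $PC \in \scrL$ or $PD \in \scrL$.

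Either conclusion is fatal. If $PC \in \scrL$, then $A,B,C,P$ with the lines $AB, BC, CP, PA \in \scrL$ form a $4$-gon; if $PD \in \scrL$, then $A,E,D,P$ with $AE, ED, DP, PA$ does; both contradict Lemma~\ref{lem_no3or4gons}. The step I expect to be the crux is the initial recognition that $\pi_A \supseteq \ell$ (which is what yields $AP \in \scrL$ and hence $A \in \pi_P$) together with the choice of $T$: once $T$ is fixed so that $\pi_P \cap T$ is pinned to be a line through $P$, the fact that this line must also run through $C$ or $D$ is automatic, and the forbidden $4$-gon falls out immediately.
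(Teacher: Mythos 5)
Your proof is correct and is essentially the paper's own argument run in the opposite direction: both proofs intersect $\pi_P$ with the solid $\langle B,C,D,E\rangle$ (whose line set is pinned to $\scrL_C\cup\scrL_D$, of size $2q+1$) and play the bound (Sd) off against the absence of quadrangles. The paper uses Lemma \ref{lem_no3or4gons} to show the resulting line avoids $C$ and $D$ and then contradicts (Sd) with a $(2q+2)$nd line in the solid, whereas you use (Sd) to force your line $n$ through $C$ or $D$ and then exhibit the forbidden quadrangle --- the same ingredients arranged contrapositively.
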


\begin{proof}
  Suppose that there exists a $(q+1)$-$U$-point $P \in BE$ with $P \neq B, E$. Then the $(q+1)$-$U$-plane $\pi_P$ on $P$ meets the plane $\langle C, D, E \rangle$ in a point $Q$.
  
  Here $D \notin PQ$, since otherwise $\{A, P, D, E\}$ would be a quadrangle, and $C \notin PQ$, since otherwise $\{ A, P, C, B\}$ would be a quadrangle. Thus $PQ$ is not contained in $\langle C, D, E\rangle$ or $\langle B, C, D\rangle$.
  
  Hence the solid $\langle B, C, D, E \rangle$ contains the $(q+1)$-$U$-planes $\langle C, D, E\rangle$, $\langle B, C, D\rangle$, and the additional line $PQ$. Therefore $\langle B, C, D, E \rangle$ contains at least $2q+2$ lines, a contradiction to (Sd).
\end{proof}

\begin{prop}\label{lem_border4d}
  The subspace $U$ contains at least $q^3-q^2+4q+1$ lines of $\scrL$.
\end{prop}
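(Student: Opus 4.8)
The plan is to work entirely inside $U$, which is a $\PG(4,q)$ by Lemma \ref{lem_dimU}, and to show that the pentagon forces so many lines of $\scrL$ into $U$ that (4d) is violated. First I would record the local structure at the vertices. Since $AB,EA\in\scrL_A$ are two distinct lines of $\scrL$ through $A$ contained in $U$, the plane $\pi_A=\langle\scrL_A\rangle$ equals $\langle A,B,E\rangle\subseteq U$; as $\scrL_A$ consists of $q+1$ concurrent lines in the plane $\pi_A$, it is the full pencil through $A$ in $\pi_A$, so every point of $\pi_A$ lies on a line of $\scrL$ and hence belongs to $\scrP$. The same holds for $\pi_B,\dots,\pi_E$. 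Using that $\scrL$ has no $3$- or $4$-gons (Lemma \ref{lem_no3or4gons}) together with (Pl), adjacent planes meet exactly in the corresponding edge and non-adjacent planes meet exactly in a vertex, so these five planes already contribute $5q$ lines and cover $5q^2$ points of $\scrP$ inside $U$.

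Since $5q$ is far below the target, the heart of the argument is to produce about $q^3$ further lines inside $U$. I would organize the bookkeeping through the $(q+1)$-$U$-points, i.e. the points $R$ with $\pi_R\subseteq U$. A short argument from (Pl) and (Pt) shows that the $(q+1)$-$U$-planes of $U$ are precisely the planes $\pi_R$ for these points $R$, and that for a point $R\in\pi_X$ with $R\neq X$ one has $\pi_R\neq\pi_X$, so $\langle\pi_X,\pi_R\rangle$ is a solid carrying the $2q+1$ lines of $\pi_X\cup\pi_R$, which by (Sd) is a full solid. Counting the lines of $\scrL_U$ against the $q+1$ solids of $U$ through the fixed plane $\pi_X$ then yields the clean identity that the number of lines of $\scrL_U$ meeting $\pi_X$ in a single point equals $q\cdot f_X$, where $f_X$ is the number of $(q+1)$-$U$-points lying in $\pi_X$ other than $X$.

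The remaining task is to show that $f_X$ is of order $q^2$ for each vertex $X$. Here I expect to use the triangle cut out in $\pi_X$ by its two incident edges and the opposite diagonal: Proposition \ref{lem_nothreepoints} says precisely that the interior points of the diagonal are \emph{not} $(q+1)$-$U$-points, and the plan is to prove the complementary statement that every point of $\pi_X$ off this triangle \emph{is} a $(q+1)$-$U$-point, giving $f_X\ge(q-1)^2$ besides the two vertices on $\pi_X$. Feeding this into the per-plane identity and then assembling the contributions of the five planes by inclusion--exclusion---using (Pl) to bound the intersection of two $(q+1)$-planes by a line and (Sd) to bound by two the number of $(q+1)$-pencils sharing a line---should produce the lower bound $|\scrL_U|\ge q^3-q^2+4q+1$.

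The main obstacle is the step forcing the off-triangle points to be $(q+1)$-$U$-points: a priori the $q$ further lines of $\scrL$ through such a point $R$ are free to leave $U$ into $\PG(n,q)$, so this cannot be decided line by line and must come from a global argument. The natural route is by contradiction: if $\pi_R\not\subseteq U$, then $\pi_R\cap U$ is only the line $XR$, and I would play the resulting configuration off against the full solids $\langle\pi_X,\pi_Y\rangle$ attached to the two edges at $X$ and against Proposition \ref{lem_nothreepoints}, using (Sd) to exhibit a solid carrying more than $2q+1$ lines of $\scrL$. Making this forcing argument cover exactly the off-triangle points---neither fewer nor more---so that the overlap count is tight enough to deliver the final $+1$ rather than a weaker constant, is where the real work lies.
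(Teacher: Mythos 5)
Your plan diverges from the paper's proof, and its pivotal step is not merely left open but is actually false. You propose to obtain roughly $q^3$ lines by showing that each vertex plane $\pi_X$ contains about $(q-1)^2$ $(q+1)$-$U$-points $R\neq X$, each contributing $q$ lines of $\scrL_U$ through $R$ via the full solid $\langle\pi_X,\pi_R\rangle$. But the very mechanism you invoke caps $f_X$ at $q+1$: distinct $(q+1)$-$U$-points $R_1,R_2\neq X$ of $\pi_X$ must yield distinct solids $\langle\pi_X,\pi_{R_i}\rangle$, since a single solid containing $\pi_X$, $\pi_{R_1}$ and $\pi_{R_2}$ would carry at least $(q+1)+q+(q-1)=3q>2q+1$ lines of $\scrL$, violating (Sd); and a plane of the $4$-space $U$ lies on only $q+1$ solids of $U$. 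This is precisely Lemma \ref{kor_bound_qp1_points_plane} of the paper. Hence $q\cdot f_X\le q(q+1)$, so summing over the five vertices yields only $O(q^2)$ lines meeting the planes $\pi_A,\dots,\pi_E$ in a single point; moreover, in $\PG(4,q)$ a line can be disjoint from a plane, so lines of $\scrL_U$ avoiding all five vertex planes escape your bookkeeping altogether. No strengthening of the ``every off-triangle point of $\pi_X$ is a $(q+1)$-$U$-point'' step can rescue the strategy, because that statement contradicts (Sd) for $q\ge 4$.

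The paper reaches the bound by a different decomposition. It considers the $q^2-1$ pairs $(\ell,\ell')\in(\scrL_B\setminus\{AB\})\times(\scrL_E\setminus\{EA\})$ other than $(BC,DE)$. Each such pair consists of disjoint lines (no quadrangles) and spans a solid with at least $q+1$ lines of $\scrL$ by (Sd); at least $q-1$ of these avoid $\bigcup_{P\in\scrF}\scrL_P$, because a third line of the solid meeting a vertex would force $\pi_E$, then $\pi_B$, then all of $U$ into the solid, contradicting $\dim(U)=4$. Proposition \ref{lem_nothreepoints} then shows that these contributions are pairwise disjoint across different pairs, giving $(q^2-1)(q-1)+5q=q^3-q^2+4q+1$. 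The correct observations in your write-up (the $5q$ pencil lines and the identity counting lines meeting $\pi_X$ in one point) are fine as far as they go, but they do not lead to the proposition.
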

\begin{proof}
  Set $\scrS := \left( (\scrL_B \setminus \{AB\}) \times (\scrL_E \setminus \{EA \}) \right) \setminus \{(BC, DE)\}$. 
  Consider a pair $(\ell, \ell') \in \scrS$ and the associated solid $S = \langle \ell, \ell' \rangle$.
  Notice that $\ell \cap \ell' = \emptyset$, since there do not exist quadrangles. By (Sd), the solid $S$ contain $q+1$ or $2q+1$ lines of $\scrL_U$. 
  
  Suppose that $S$ contains a line $s \in \bigcup_{P \in \scrF} \scrL_P$ with $s \neq \ell$ and $s \neq \ell'$. 
  W.l.o.g. we have the following three cases:
  \begin{enumerate}
    \item $s \in \scrL_E$: Hence $\pi_E = \langle s, \ell' \rangle \subseteq S$. 
    \item $s \in \scrL_A$: So $EA \subseteq \langle s, \ell' \rangle \subseteq S$. Hence $\pi_E = \langle EA, \ell' \rangle \subseteq S$.
    \item $s \in \scrL_D$: So $DE \subseteq \langle s, \ell' \rangle \subseteq S$. Hence $\pi_E = \langle DE, \ell' \rangle \subseteq S$.
  \end{enumerate}
  If $\pi_E \subseteq S$, then $A \in S$. Hence $\pi_B \subseteq \langle A, \ell \rangle \subseteq S$. Thus $U = \langle \pi_B, \pi_E \rangle \subseteq S$. This is a contradiction to Lemma \ref{lem_dimU}. Hence, such a line $s$ does not exist. Hence,
  \begin{align}
    \scrL_S \cap \bigcup_{P \in \scrF} \scrL_P = \{ \ell, \ell'\}.\label{eq_cut_empty1}
  \end{align}
  
  Now consider pairs $(\ell_1, \ell_1') \in \scrS$ and $(\ell_2, \ell_2') \in \scrS$ of disjoint lines with associated solids $S_1 := \langle \ell_1, \ell_1' \rangle$ and $S_2 := \langle \ell_2, \ell_2' \rangle$. We want to show that $S_1 \cap S_2$ contains no line of $\scrL \setminus (\scrL_B \cup \scrL_E)$: The plane $S_1 \cap S_2$ contains $BE$. Hence every line $s$ of $\scrL \setminus (\scrL_B \cup \scrL_E)$ in $S_1 \cap S_2$ has to meet $BE \subseteq \pi_A$ in a point $P$. The solids $S_1$ and $S_2$ contain no line of $\scrL_A$ by \eqref{eq_cut_empty1}, so $\pi_A \nsubseteq S_1 \cap S_2$. In particular $s \notin \scrL_A$. Hence $P$ is contained in a line $t$ of $\scrL_A$ with $t \neq s$. Thus $s, t \in \scrL_P$, and $P$ is a $(q+1)$-$U$-point. This is a contradiction to Proposition \ref{lem_nothreepoints}. Hence such a line $s$ does not exist. Hence together with \eqref{eq_cut_empty1} and $|\scrS| = q^2-1$, $U$ contains at least
  \begin{align*}
    |\scrL_U| &\geq \sum_{(\ell, \ell') \in \scrS} |\scrL_{\langle \ell, \ell' \rangle}| + \left| \bigcup_{P \in \scrF} \scrL_P\right| \\
    &\geq (q^2-1)(q-1) + 5q = q^3 - q^2 + 4q + 1
  \end{align*}
  lines.
\end{proof}

\begin{proof}[Proof of Theorem \ref{main1}]
If $\scrL$ contains a pentagon, hence {\bf(AS)} is true, then $\scrL$ does contain a subspace $U$ with at least $q^3-q^2+4q+1$ lines of $\scrL$ by Proposition \ref{lem_border4d}. Hence, if $\scrL$ also satisfies (4d), then $\scrL$ contains no pentagon. In this case (as in \cite{maldeghem_splitcaley_by_intersect}) a standard counting argument shows that $\scrL$ is the line set of a generalized hexagon of order $q$ flatly and fully embedded in $\PG(n,q)$. Thus (ii) of the main result of \cite{thasmaldeghem_embeddings_split_caley} implies that $\scrL$ is isomorphic to the line set of the standard embedding of $\tH(q)$ in $\PG(6,q)$.
\end{proof}

\section{Proof of Theorem \ref{main2}}

Now let $\scrL$ be a line set that satisfies (Pt), (Pl), (Sd), and (To), but not necessarily (4d). In this section we want to show that (4d) is implied by (Hp') to compare (4d) to condition (Hp) of \cite{maldeghem_splitcaley_by_intersect}. For this we will show that a pentagon in $\scrL$ implies approximately $q^4$ lines of $\scrL$ in a hyperplane of $\PG(n,q)$.

\begin{lemma}\label{lem_expand1}
 Let $M$ be s subspace of $\PG(n,q)$. Let $\ell \in \scrL \setminus \scrL_M$ be a line that meets one line $s \in \scrL_M$. 
 \begin{enumerate}[(a)]
  \item Then $s$ is the only line of $\scrL_M$ meeting $\ell$.
  \item If $\ell^M \in \scrL_M$ meets $s$ exactly in a point, then $\scrL_{\langle \ell^M, \ell \rangle} = \scrL_{\langle \ell^M, s \rangle} \cup \scrL_{\langle s, \ell \rangle}$.
  \item If $\ell^M_1, \ell^M_2 \in \scrL_M$ meet $s$ exactly in points $P$ resp. $Q$, then
    \begin{align*}
      \scrL_{\langle \ell^M_1, \ell \rangle} \cap \scrL_{\langle \ell^M_2, \ell \rangle} = 
      \begin{cases}
       \scrL_{\langle \ell^M_1, \ell \rangle} & \text{ if } P = Q,\\
       \scrL_{\langle s, \ell \rangle} & \text{ if } P \neq Q.
      \end{cases}
    \end{align*}
 \item If $\ell^M_1 \in \scrL_M$ meets $s$ exactly in a point $P$ and $\ell^M_2 \in \scrL_M$ is disjoint from $s$, then 
 \begin{align*}
  \scrL_{\langle \ell^M_1, \ell \rangle} \cap \scrL_{\langle \ell^M_2, \ell \rangle} = \{ \ell \}.
 \end{align*}
 \end{enumerate}
\end{lemma}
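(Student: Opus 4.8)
The plan is to prove the four parts in the order (a), (b), (c), (d), each using the earlier ones, and to rely throughout on Lemma~\ref{lem_no3or4gons} together with one preliminary observation: \emph{a plane $\pi$ containing at least two lines of $\scrL$ contains exactly $q+1$ of them, they are concurrent in a single point $P$, and $\pi=\pi_P$}. Indeed, (Pl) forces the number to be $q+1$; if these lines were not concurrent, three of them would meet pairwise in three distinct points and form a $3$-gon, contradicting Lemma~\ref{lem_no3or4gons}; finally the coplanarity of $\scrL_P$ identifies $\pi$ with $\pi_P$. For (a) I would argue by contradiction: if $s'\in\scrL_M$ with $s'\neq s$ also meets $\ell$, then $\ell$ contains the point $\ell\cap s\in M$ and the point $\ell\cap s'\in M$. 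If these are distinct, $\ell$ is spanned by two points of $M$, so $\ell\subseteq M$, contradicting $\ell\notin\scrL_M$. Otherwise $s,s',\ell$ are concurrent in a point $X$, so $s,s'\in\scrL_X$ are coplanar with $\ell$ and $\ell\subseteq\langle s,s'\rangle\subseteq M$, the same contradiction.

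For (b), write $P'=s\cap\ell$ and $P=s\cap\ell^M$. By (a) the line $\ell^M\neq s$ does not meet $\ell$, so $P\neq P'$ and $s=\langle P,P'\rangle\subseteq\langle\ell^M,\ell\rangle=:T$; hence both planes $\pi_1:=\langle\ell^M,s\rangle$ and $\pi_2:=\langle s,\ell\rangle$ lie in $T$. Each contains at least two lines of $\scrL$, so exactly $q+1$ by the observation. They are distinct (otherwise $\ell\subseteq\pi_1\subseteq M$) and share $s$, so $\pi_1\cap\pi_2=s$ and $\scrL_{\pi_1}\cap\scrL_{\pi_2}=\{s\}$. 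Thus $\scrL_{\pi_1}\cup\scrL_{\pi_2}\subseteq\scrL_T$ has exactly $2q+1$ elements, and since (Sd) forces $|\scrL_T|\leq 2q+1$, equality gives $\scrL_T=\scrL_{\pi_1}\cup\scrL_{\pi_2}$.

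For (c), if $P=Q$ then $\ell^M_1,\ell^M_2,s\in\scrL_P$ are coplanar, so $\langle\ell^M_1,s\rangle=\pi_P=\langle\ell^M_2,s\rangle$; applying (b) to both solids gives $\scrL_{\langle\ell^M_1,\ell\rangle}=\scrL_{\pi_P}\cup\scrL_{\langle s,\ell\rangle}=\scrL_{\langle\ell^M_2,\ell\rangle}$, so the two sets coincide and their intersection is the whole set. If $P\neq Q$, applying (b) to each solid and intersecting leaves $\scrL_{\langle s,\ell\rangle}$ together with $\scrL_{\langle\ell^M_1,s\rangle}\cap\scrL_{\langle\ell^M_2,s\rangle}$. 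These last two planes are distinct, because every line of $\scrL$ in $\langle\ell^M_1,s\rangle$ passes through $P$, whereas $\ell^M_2$ passes through $Q\neq P$ and misses $P$; sharing $s$, they meet exactly in $s$, so their common lines reduce to $\{s\}\subseteq\scrL_{\langle s,\ell\rangle}$, and the intersection equals $\scrL_{\langle s,\ell\rangle}$.

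Part (d) is the \textbf{main obstacle}: the solid $T_2:=\langle\ell^M_2,\ell\rangle$ cannot be decomposed by (b), since $\ell^M_2$ is disjoint from $s$. I would therefore decompose only $T_1:=\langle\ell^M_1,\ell\rangle$ via (b) as $\scrL_{T_1}=\scrL_{\pi_P}\cup\scrL_{\pi_{P'}}$, where $\pi_P=\langle\ell^M_1,s\rangle\subseteq M$ and $\pi_{P'}=\langle s,\ell\rangle$, and then decide which of these lines can lie in $T_2$. The crucial claim is $s\not\subseteq T_2$: otherwise the two disjoint lines $s,\ell^M_2\subseteq M$ would span the solid $T_2$, giving $T_2=\langle s,\ell^M_2\rangle\subseteq M$ and hence $\ell\subseteq M$, a contradiction. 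Consequently $s\cap T_2=\{P'\}$, so $P\notin T_2$; as every line of $\scrL_{\pi_P}$ passes through $P$, none of them lies in $T_2$. On the other side, every line of $\scrL_{\pi_{P'}}$ passes through $P'$, and if such a line other than $\ell$ lay in $T_2$ it would span $\pi_{P'}$ with $\ell$, forcing $\pi_{P'}\subseteq T_2$ and again $s\subseteq T_2$, a contradiction. Hence the only common line is $\ell$, i.e. $\scrL_{T_1}\cap\scrL_{T_2}=\{\ell\}$. I expect essentially all the difficulty to concentrate in establishing $s\not\subseteq T_2$ and in turning the geometric fact $P\notin T_2$ into the vanishing of the whole pencil $\scrL_{\pi_P}$, with (b) doing the bookkeeping for $T_1$.
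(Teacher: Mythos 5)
Your proposal is correct and follows essentially the same route as the paper: parts (a)--(c) are handled identically (flatness at the intersection point, the $(q+1)$-plane/$(2q+1)$-solid decomposition via (Pl) and (Sd)), and in (d) your key observation that $s\not\subseteq\langle\ell,\ell^M_2\rangle$ (hence $P\notin\langle\ell,\ell^M_2\rangle$) is precisely the fact the paper's two-case argument rests on. No gaps; your write-up is merely somewhat more explicit than the paper's.
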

\begin{proof}
 \begin{enumerate}[(a)]
  \item The point $\ell \cap s$ is a $(q+1)$-point $P$ and $\pi_P \cap M = s$.
  \item Each of the planes $\langle \ell^M, s \rangle$ and $\langle s, \ell \rangle$ contains two lines of $\scrL$, hence they are both $(q+1)$-planes by (Pl). By (Sd) the $2q+1$ line of $\scrL_{\langle \ell^M, s \rangle} \cup \scrL_{\langle s, \ell \rangle}$ are all lines of $\scrL_{\langle \ell^M, \ell \rangle}$. The assertion follows.
  \item Since $\scrL_{\langle \ell^M_1, s \rangle} = \scrL_P$ and $\scrL_{\langle \ell^M_2, s \rangle} = \scrL_Q$, (b) shows the assertion.
  \item Let $\ell' \in \scrL_{\langle \ell, \ell^M_1 \rangle} \cap \scrL_{\langle \ell, \ell^M_2 \rangle}$.
As in (b) $\scrL_{\langle \ell, \ell^M_1 \rangle} = \scrL_{\langle \ell, s\rangle} \cup \scrL_{\langle s, \ell^M_1 \rangle}$, hence $\ell' \in \scrL_{\langle \ell, s\rangle}$ or $\ell' \in \scrL_{\langle s, \ell^M_1 \rangle}$.
\begin{enumerate}[1.]
 \item If $\ell' \in \scrL_{\langle \ell, s\rangle}$, then $\ell' \in \scrL_{\langle \ell, s\rangle} \cap \scrL_{\langle \ell, \ell^M_2 \rangle} = \{ \ell\}$ as $s$ does not lie in the plane $\langle \ell, \ell^M_2 \rangle \cap M$, since $s$ is disjoint from $\ell_2^M$.
 \item If $\ell' \in \scrL_{\langle s, \ell^M_1 \rangle}$, then $\ell' \cap s, \ell \cap s \in \langle \ell, \ell^M_2 \rangle \cap M$. Hence $s$ is contained in $\langle \ell, \ell^M_2 \rangle \cap M$, contradicting $s \nsubseteq \langle \ell, \ell_2^M \rangle \cap M$.
\end{enumerate}
Hence, $\ell = \ell'$. Hence, $\scrL_{\langle \ell, \ell^M_1 \rangle} \cap \scrL_{\langle \ell, \ell^M_2 \rangle} = \{ \ell \}$.
 \end{enumerate}
\end{proof}

\begin{lemma}\label{lem_expand2}
 Let $M$ be s subspace of $\PG(n,q)$. Let $\ell \in \scrL \setminus \scrL_M$. Let
 \begin{align*}
  \scrS := \{ \ell^M \in \scrL_M ~|~ \scrL_{\langle \ell^M, \ell \rangle} \cap \scrL_M = \{ \ell^M \} \text{ and } \ell^M \cap \ell = \emptyset \}.
 \end{align*}
  If $\ell^M_1 \in \scrS$, then $\scrL_{\langle \ell^M_1, \ell \rangle}$ contains at least $q$ lines not in
  \begin{align*}
    \bigcup_{\ell^M_2 \in \scrS \setminus \{ \ell^M_1 \}} \scrL_{\langle \ell^M_2, \ell \rangle}.
  \end{align*}
\end{lemma}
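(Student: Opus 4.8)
The plan is to reduce the statement to a single counting fact: the solid $S_1 := \langle \ell^M_1, \ell \rangle$ contains at least $q$ lines of $\scrL$ that are \emph{skew} to $\ell$, combined with the observation that any line of $\scrL_{S_1}$ skew to $\ell$ cannot be contained in a second solid $\langle \ell^M_2, \ell \rangle$. First I would fix the shape of $S_1$. Since $\ell^M_1 \in \scrS$, the lines $\ell^M_1$ and $\ell$ are disjoint, so $S_1$ is genuinely a solid, and $\scrL_{S_1}$ contains the two distinct lines $\ell$ and $\ell^M_1$. Hence (Sd) forces $|\scrL_{S_1}| \in \{q+1, 2q+1\}$.

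Next I would analyse how $\scrL_{S_1}$ meets $\ell$, point by point. For a point $P \in \ell$ the lines of $\scrL_{S_1}$ through $P$ are exactly the lines of $\scrL$ in $\pi_P \cap S_1$. As $\ell \subseteq \pi_P \cap S_1$, this intersection is either the line $\ell$ itself (so only $\ell$ passes through $P$ inside $S_1$) or the whole plane $\pi_P$ (so all $q+1$ lines of $\scrL_P$ lie in $S_1$). Writing $k$ for the number of points $P \in \ell$ with $\pi_P \subseteq S_1$, and using that a line different from $\ell$ meets $\ell$ in a unique point (so lines through distinct points of $\ell$ are never identified), I obtain that the number of lines of $\scrL_{S_1}$ meeting $\ell$ equals $1 + kq$.

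The main estimate then uses that $\ell^M_1 \in \scrL_{S_1}$ is skew to $\ell$, so not every line of $\scrL_{S_1}$ meets $\ell$; that is, $1 + kq < |\scrL_{S_1}|$. If $|\scrL_{S_1}| = q+1$ this forces $k = 0$, and if $|\scrL_{S_1}| = 2q+1$ it forces $k \le 1$; in either case the number of lines of $\scrL_{S_1}$ skew to $\ell$, namely $|\scrL_{S_1}| - 1 - kq$, is at least $q$. Finally I would show each such skew line $m$ avoids every other solid: if $m \subseteq \langle \ell^M_2, \ell \rangle =: S_2$ for some $\ell^M_2 \in \scrS \setminus \{\ell^M_1\}$, then $\ell$ and $m$ are skew lines both inside $S_2$, so $S_2 = \langle \ell, m \rangle \subseteq S_1$ and hence $S_2 = S_1$; but $\ell^M_2 \notin S_1$ (otherwise $\ell^M_2 \in \scrL_{S_1} \cap \scrL_M = \{\ell^M_1\}$, contradicting $\ell^M_2 \neq \ell^M_1$), so $S_2 \neq S_1$, a contradiction. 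Thus these at least $q$ lines lie in $\scrL_{\langle \ell^M_1, \ell \rangle}$ but in none of the sets $\scrL_{\langle \ell^M_2, \ell \rangle}$, which is the claim.

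The step I expect to require the most care is the point-by-point description of $\scrL_{S_1}$ along $\ell$ leading to the formula $1 + kq$: one must argue that $\pi_P \cap S_1$ can only be $\ell$ or the full plane $\pi_P$, and that the contributions of different points of $\ell$ do not overlap except in $\ell$ itself. Once this dichotomy is in place, bounding $k$ via the existence of the single skew line $\ell^M_1$, and ruling out shared skew lines between $S_1$ and $S_2$, are both short dimension counts.
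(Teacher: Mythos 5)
Your proof is correct, but it takes a genuinely different route from the paper's. The paper argues pairwise: for two lines $\ell^M_1, \ell^M_2 \in \scrS$ it examines the intersection $\pi = \langle \ell, \ell^M_1 \rangle \cap \langle \ell, \ell^M_2 \rangle$, disposes quickly of the case where $\pi$ carries at most one line of $\scrL$, and in the delicate case where $\pi$ is a $(q+1)$-plane it exhibits a second $(q+1)$-plane $\langle t, \ell^M_1 \rangle$ inside the $(2q+1)$-solid $\langle \ell, \ell^M_1 \rangle$ whose lines are then shown to avoid every other solid $\langle \ell, \ell^M_3 \rangle$. You instead make the ``private'' lines explicit from the start: you count the lines of $\scrL_{\langle \ell^M_1, \ell\rangle}$ skew to $\ell$ via the dichotomy $\pi_P \cap S_1 \in \{\ell, \pi_P\}$ for $P \in \ell$ (giving $1+kq$ lines meeting $\ell$, with $k \le 1$ forced by the presence of the skew line $\ell^M_1$ and (Sd)), and then observe that a line $m$ skew to $\ell$ pins down the solid $\langle \ell, m\rangle$, so it cannot lie in a second solid of the family without forcing $\ell^M_2 \in \scrL_{S_1} \cap \scrL_M = \{\ell^M_1\}$. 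Your version avoids the paper's case distinction on the structure of $S_1 \cap S_2$ and is arguably cleaner and more self-contained (it does not lean on the decomposition of a $(2q+1)$-solid into two $(q+1)$-planes from Lemma \ref{lem_expand1}(b)); the paper's version has the side benefit of describing exactly how two solids of the family can share lines, information it reuses in the surrounding lemmas. All the ingredients you use --- $\pi_P$ being a plane, (Pl), (Sd) --- are available at this point in the paper, so the argument stands.
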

\begin{proof}
Let $\ell^M_1, \ell^M_2 \in \scrS$ with $\ell^M_1 \neq \ell^M_2$. Set $\pi := \langle \ell, \ell^M_1 \rangle \cap \langle \ell, \ell^M_2 \rangle$. By definition of $\scrS$, $\dim(\pi) \leq 2$. Since $\ell \in \pi$, either $\pi = \ell$, or $\pi$ is a plane with $\scrL_\pi \in \{1,q+1\}$ by (Pl).

If $\dim(\pi) \leq 1$ or $\pi$ is a $1$-plane, then clearly $\scrL_{\langle \ell, \ell^M_1 \rangle} \cap \scrL_{\langle \ell, \ell^M_2 \rangle} = \{ \ell \}$.
Hence, if only this case occurs, then the assertion is obviously true.
 
If $\pi$ is a $(q+1)$-plane, then $\pi$ contains no line of $\scrL_M$, since $\pi \cap M \neq \ell^M_1, \ell^M_2$. Hence, the line $t \in \scrL_\pi$ that meets $\ell^M_1$ is not contained in $M$. Hence, $\langle t, \ell^M_1 \rangle$ is a $(q+1)$-plane not in $M$.
 Thus $\langle \ell , \ell^M_1 \rangle$ is a $(2q+1)$-solid that consists of the two $(q+1)$-planes $\langle t, \ell^M_1 \rangle$ and $\pi$. 
 Any line $\ell' \in \scrL_{\langle \ell, \ell^M_1 \rangle}$ with $\ell' \nsubseteq \pi$ is in $\scrL_{\langle t, \ell^M_1 \rangle}$, hence $\langle \ell, \ell' \rangle = \langle \ell, \ell^M_1 \rangle$. 
 
 Suppose that there exists a line $\ell^M_3 \in \scrS$ with $\ell' \subseteq \langle \ell, \ell^M_3 \rangle$. 
 Then $\langle \ell, \ell' \rangle = \langle \ell, \ell^M_1 \rangle$ implies $\ell^M_3 \subseteq \langle \ell, \ell^M_1 \rangle \cap M = \pi \cap M$.
 This contradicts $\scrL_{\pi \cap M} = \{ \ell^M_1 \}$. Hence, in the case that $\pi$ is a $(q+1)$-plane we find
 \begin{align*}
  \left|\scrL_{\langle \ell, \ell^M_1\rangle} \setminus \bigcup_{\ell^M_1 \neq \ell^M_2 \in \scrS} \scrL_{\langle \ell, \ell^M_2\rangle }\right| = \left| \scrL_{\langle t, \ell^M_1 \rangle} \setminus \{ \ell \} \right| = q.
 \end{align*}
\end{proof}

\begin{prop}\label{lem_expand}
  Let $M$ be a subspace of $\PG(n,q)$. Let $\ell \in \scrL$ be a line that meets $M$ in exactly one point.
  \begin{enumerate}[(a)]
  \item If $\ell$ meets no line of $\scrL_M$, then
  \begin{align*}
    |\scrL| \geq q|\scrL_M|+1.
  \end{align*}
  \item If $\ell$ meets a line $s$ of $\scrL_M$ with $\alpha$ $(q+1)$-$M$-points, then
  \begin{align*}
    |\scrL| \geq q|\scrL_M| - \alpha q^2 + \alpha q + 1 \geq q|\scrL_M| - q^3+q^2+1.
  \end{align*}
  \end{enumerate}
\end{prop}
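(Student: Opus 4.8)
The plan is to reduce both parts to the expansion estimate furnished by Lemma~\ref{lem_expand2}. Write $P$ for the unique point $\ell\cap M$. I first record the clean consequence of Lemma~\ref{lem_expand2} that holds in both cases: the $q$ lines it produces for each $\ell^M_1\in\scrS$ are pairwise disjoint across $\scrS$, and none of them is $\ell$ (since $\ell$ lies in every solid $\langle\ell^M,\ell\rangle$ and is therefore never ``unique''), so that
\begin{align*}
  |\scrL| \geq q|\scrS| + 1 .
\end{align*}
Thus everything comes down to estimating $|\scrS|$ and, in case (b), to recovering the lines this crude bound discards.

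For part (a) I would show $\scrS=\scrL_M$. Here no line of $\scrL_M$ meets $\ell$, so $P$ is a $0$-$M$-point and every $\ell^M\in\scrL_M$ is disjoint from $\ell$. For such a line $\langle\ell^M,\ell\rangle\cap M=\langle\ell^M,P\rangle$ is a plane; were it a $(q+1)$-plane, all its $q+1$ lines of $\scrL$ would pass through a common point $R\in\ell^M$ (a $(q+1)$-plane is concurrent by the first Lemma), forcing the line $RP\subseteq M$ into $\scrL$ and hence a line of $\scrL_M$ through $P$, contradicting that $P$ is a $0$-$M$-point. So $\langle\ell^M,P\rangle$ is a $1$-plane, $\ell^M\in\scrS$, and $\scrS=\scrL_M$; the displayed estimate gives $|\scrL|\geq q|\scrL_M|+1$.

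For part (b), Lemma~\ref{lem_expand1}(a) tells me $s$ is the only line of $\scrL_M$ meeting $\ell$, that $P=\ell\cap s$, and that $\pi_P\cap M=s$. I would identify $\scrS=\{\ell^M\in\scrL_M:\ell^M\cap s=\emptyset\}$: a line disjoint from $s$ cannot make $\langle\ell^M,P\rangle$ a $(q+1)$-plane (the same common-point argument would place $R$ on $s$), while a line $\ell^M\neq s$ meeting $s$ at a point $Q\neq P$ satisfies $s=PQ\subseteq\langle\ell^M,P\rangle$, so $\langle\ell^M,P\rangle$ is a $(q+1)$-plane and $\ell^M$ is excluded. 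The excluded lines are exactly the lines meeting $s$: besides $s$ itself, each runs through a point of $s$ carrying a second line of $\scrL_M$, i.e. a $(q+1)$-$M$-point, and each of the $\alpha$ such points accounts for exactly $q$ of them. Hence $|\scrS|=|\scrL_M|-\alpha q-1$.

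The subtle point, and the main obstacle, is that feeding this into $|\scrL|\geq q|\scrS|+1$ falls short of the claimed bound by $q(\alpha+1)$, so the discarded lines must be put back by counting exterior lines directly. Lemma~\ref{lem_expand2} yields $q$ lines per $\ell^M\in\scrS$, one of which is $\ell^M\in M$ itself; the remaining $q-1$ lie outside $M$ and are pairwise distinct across $\scrS$, giving $(q-1)|\scrS|$ exterior lines. Separately, the pencil $\scrL_{\pi_P}\setminus\{s\}$ consists of $q$ lines through $P$ lying outside $M$ (as $\pi_P\cap M=s$), and these are genuinely new: if some $m\in\scrL_{\pi_P}\setminus\{s\}$ lay in a solid $\langle\ell^M,\ell\rangle$ with $\ell^M\in\scrS$, then $\pi_P=\langle m,\ell\rangle$ would lie in that solid, forcing $s=\pi_P\cap M\subseteq\langle\ell^M,\ell\rangle\cap M=\ell^M$ and so $s=\ell^M$, contradicting $s\notin\scrS$. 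Adding the $|\scrL_M|$ lines in $M$, the $(q-1)|\scrS|$ exterior lines from $\scrS$, and the $q$ lines of the pencil, I obtain
\begin{align*}
  |\scrL| \geq |\scrL_M| + (q-1)|\scrS| + q = q|\scrL_M| - \alpha q^2 + \alpha q + 1 .
\end{align*}
The final inequality $-\alpha q^2+\alpha q\geq -q^3+q^2$ then follows from $\alpha\leq q$, which holds because $P\in s$ is not a $(q+1)$-$M$-point (it lies on $\ell\not\subseteq M$), so at most $q$ of the $q+1$ points of $s$ can be $(q+1)$-$M$-points.
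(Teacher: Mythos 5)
Your proof is correct and follows essentially the same route as the paper: both arguments identify the same set $\scrS=\scrS_1$ of lines of $\scrL_M$ disjoint from $s$ (of size $|\scrL_M|-\alpha q-1$), apply Lemma~\ref{lem_expand2} to it, and arrive at the identical count $|\scrL_M|+(q-1)|\scrS|+q$. The only difference is bookkeeping: where the paper handles the lines meeting $s$ via Lemma~\ref{lem_expand1}(c) and (d), you count $\scrL_M$ and the pencil $\scrL_{\pi_P}\setminus\{s\}$ directly and verify the disjointness by hand, which amounts to the same decomposition.
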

\begin{proof}
If $s$ exists, then set
\begin{align*}
 &\scrS_1 := \{ \ell^M \in \scrL_M ~|~ \scrL_{\langle \ell^M, \ell \rangle} \cap \scrL_M = \{ \ell^M \}  \text{ and } \ell^M \cap \ell = \emptyset \},\\
 &\scrS_2 := \{ \ell^M \in \scrL_M ~|~ \scrL_{\langle \ell^M, \ell \rangle} \cap \scrL_M = \scrL_{\langle s, \ell^M \rangle}  \}.
\end{align*}
By Lemma \ref{lem_expand1} (b), all lines which meet $s$ in exactly a point are in $\scrS_2$. Conversely, if $\ell^M \in \scrS_2$, then $\langle s, \ell^M \rangle$ is a plane and $\ell^M$ meets $s$ in exactly a point. Hence, we can apply Lemma \ref{lem_expand1}. Furthermore, for the same reasons $\scrL_M = \scrS_1 \cup \scrS_2 \cup \{ s \}$.

If $s$ does not exists, then set $\scrS_1 := \scrL_M$ and $\scrS_2 := \emptyset$. By Lemma \ref{lem_expand2},
\begin{align*}
 &\left| \bigcup_{\ell^M \in \scrS_1} \scrL_{\langle \ell, \ell^M \rangle} \right| \geq q|\scrS_1| + 1.
\intertext{This shows (a). Hence from now on we suppose that $s$ exists. By Lemma \ref{lem_expand1} (c),}
 &\left| \bigcup_{\ell^M \in \scrS_2} \scrL_{\langle \ell, \ell^M \rangle} \right| = |\scrS_2| + q + 1.
\intertext{By Lemma \ref{lem_expand1} (d),}
 &\left( \bigcup_{\ell^M \in \scrS_1} \scrL_{\langle \ell, \ell^M \rangle} \right) \cap \left( \bigcup_{\ell^M \in \scrS_2} \scrL_{\langle \ell, \ell^M \rangle} \right)  = \{ \ell \}.
\intertext{Hence,}
 &\left| \bigcup_{\ell^M \in \scrS_1 \cup \scrS_2} \scrL_{\langle \ell, \ell^M \rangle} \right| \geq q|\scrS_1| + |\scrS_2| + q + 1.
\end{align*}

By definition, $|\scrS_2| = q\alpha$, and $|\scrS_1| = |\scrL_M \setminus (\scrS_2 \cup \{s \})| = |\scrL_M| - \alpha q - 1$. Hence,
  \begin{align*}
    |\scrL| &\geq q|\scrS_1| + |\scrS_2| + q + 1\\
    &= q(|\scrL_M| - \alpha q - 1) + q\alpha + q + 1 \\
    &= q|\scrL_M| - \alpha q(q-1) + 1.
  \end{align*}
  As $\alpha \leq q$ we find
  \begin{align*}
    |\scrL| \geq q|\scrL_M| - q^3 + q^2 + 1.
  \end{align*}
\end{proof}.

\begin{lemma}\label{lem_ineverypentagon}\label{lem_twoineverypentagon}
  Let $U$ be a $4$-dimensional subspace of $\PG(n, q)$ which contains a pentagon.
  \begin{enumerate}[(a)]
    \item If $P$ is a $(q+1)$-$U$-point and $V$ is a vertex of a $5$-gon contained in $U$ such that $P \neq V$ and $PV \in \scrL$, then there exists a pentagon of $U$ containing $P$ and $V$.\label{lem_twoineverypentagon_a}
    \item Every $(q+1)$-$U$-point $P$ is the vertex of a $5$-gon of $U$.\label{lem_twoineverypentagon_b}
    \item Suppose $P$, $Q$, $R$ are $(q+1)$-$U$-points with $P \neq Q, R$ and $Q, R \in \pi_P$. Suppose furthermore that $R = Q$ or $R \notin PQ$. Then there exists a pentagon of $U$ such that $P$, $Q$, $R$ are vertices of it.
  \end{enumerate}
\end{lemma}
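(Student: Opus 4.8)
My plan rests on three preliminary observations. First, if $P$ is a $(q+1)$-$U$-point then all $q+1$ lines of $\scrL_P$ lie in $U$, so $\pi_P \subseteq U$ is a plane; since a plane carries exactly $q+1$ lines through $P$, the set $\scrL_P$ consists of \emph{all} lines of $\pi_P$ through $P$, and in particular $PX \in \scrL$ for every $X \in \pi_P \setminus \{P\}$ (recall also that every point on a line of $\scrL$ lies in $\scrP$). Second, every vertex $W$ of a $5$-gon of $U$ is itself a $(q+1)$-$U$-point: the two pentagon-edges at $W$ are distinct lines of $\scrL_U$ through $W$, hence span the plane $\pi_W$, forcing $\pi_W \subseteq U$. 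Third, since $\dim U = 4$, any two planes of $U$ meet in at least a point; combined with the first observation this gives the \emph{bridging principle}: if $P, P'$ are $(q+1)$-$U$-points with $P \notin \pi_{P'}$ and $P' \notin \pi_P$, then any $X \in \pi_P \cap \pi_{P'}$ satisfies $X \neq P, P'$ and $PX, P'X \in \scrL$, and $X$ is again a $(q+1)$-$U$-point with $\pi_X = \langle PX, P'X \rangle \subseteq U$. Throughout, every coincidence of vertices or edges that would collapse a putative $5$-gon is excluded by Lemma \ref{lem_no3or4gons}, since such a collapse always produces a $3$- or $4$-gon in $\scrL$.

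For (a), write the given pentagon as $V = V_1, V_2, V_3, V_4, V_5$. If $P$ is one of the $V_i$, then $P \in \{V_2, V_5\}$ (the cases $P = V_3, V_4$ force a triangle through $V_1$) and the given pentagon already works. Otherwise $P \notin \{V_1, \ldots, V_5\}$, and $P$ lies on at most one of the edges $V_1 V_2$, $V_1 V_5$; relabel so that $P \notin V_1 V_2$. Then $PV_3 \notin \scrL$ (else $\{P, V_1, V_2, V_3\}$ is a $4$-gon), whence $P \notin \pi_{V_3}$ and $V_3 \notin \pi_P$, and the bridging principle supplies $X \in \pi_P \cap \pi_{V_3}$ with $PX, V_3 X \in \scrL$. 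I then claim $P, V_1, V_2, V_3, X$ is a $5$-gon: its edges $PV_1, V_1V_2, V_2V_3, V_3X, XP$ all lie in $\scrL$, and a routine check — each bad coincidence such as $X \in \{V_1, V_2\}$, $X \in V_2V_3$, or $X \in PV_1$ yields a $3$- or $4$-gon — shows the five vertices and five edges are distinct.

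For (b), let $P$ be a $(q+1)$-$U$-point and fix any $5$-gon $W_1, \ldots, W_5$ of $U$, each $W_i$ a $(q+1)$-$U$-point by the second observation. If $PW_1 \in \scrL$, part (a) already places $P$ in a $5$-gon. Otherwise $P \notin \pi_{W_1}$ and $W_1 \notin \pi_P$, so the bridging principle yields a $(q+1)$-$U$-point $X \in \pi_P \cap \pi_{W_1}$ with $PX, W_1X \in \scrL$. Applying part (a) to $(X, W_1)$ exhibits $X$ as a vertex of a $5$-gon of $U$, and applying part (a) again to $(P, X)$ then places $P$ in a $5$-gon.

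For (c), note first that $Q, R \in \pi_P$ with $Q, R \neq P$ give $PQ, PR \in \scrL$ by the first observation. If $R = Q$, then $Q$ is a vertex of a $5$-gon by part (b), and part (a) applied to $(P, Q)$ finishes. If $R \neq Q$, then $R \notin PQ$ by hypothesis, so $P, Q, R$ are non-collinear and $QR \notin \scrL$ (the only lines of $\scrL$ in $\pi_P$ pass through $P$ by (Pl)). Use part (b) to realize $R$ as a vertex of a $5$-gon; its two edges at $R$ give two neighbours of $R$, and these cannot both be joined to $P$ by a line of $\scrL$ (else a $4$-gon through $P$ and $R$ arises), so I may pick a neighbour $R'$ with $PR' \notin \scrL$. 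Then $QR' \notin \scrL$ as well (otherwise $\{P, Q, R', R\}$ is a $4$-gon), so $P, Q \notin \pi_{R'}$ and $R \notin \pi_Q$. The planes $\pi_Q, \pi_{R'}$ of $U$ meet in a point $A$, which by the choice of $R'$ avoids $P, Q, R, R'$ and the line $PQ$, and satisfies $QA, R'A \in \scrL$; hence $P, Q, A, R', R$ is the desired $5$-gon. The main obstacle is precisely this last configuration: one must choose the neighbour $R'$ so that the bridging point $A$ cannot collapse onto $P$ (the only way the construction could fail), and this is exactly what the observation "the two neighbours of $R$ are not both joined to $P$" guarantees; the remaining distinctness of vertices and edges again reduces to the absence of $3$- and $4$-gons.
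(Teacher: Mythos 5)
Your proof is correct and follows essentially the same strategy as the paper: both rest on the facts that vertices of pentagons in $U$ are $(q+1)$-$U$-points, that any two of the planes $\pi_P$ meet inside the $4$-dimensional $U$, and that all degeneracies are excluded by the absence of $3$- and $4$-gons. The only difference is cosmetic: in (c) you build the pentagon starting from a neighbour $R'$ of $R$ and bridge $\pi_Q$ with $\pi_{R'}$, whereas the paper normalizes $P$, $Q$ to adjacent vertices $A$, $B$ of a pentagon and bridges $\pi_R$ with $\pi_C$ -- a mirror image of the same construction.
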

\begin{proof}
  \begin{enumerate}[(a)]
  \item Let $\{ V, W, X, Y, Z\}$ be the vertices of a pentagon of $U$. The assertion is trivial if $P=Z$, so suppose that $P \neq Z$. The plane $\pi_P$ shares a point $Q$ with $\pi_Y$. As $V$, $P$, $Q$, $Y$, $Z$ are non-collinear, then $\{ V, P, Q, Y, Z\}$  is a pentagon, since $\scrL$ does not contain $3$-gons or $4$-gons.
  \item The planes $\pi_P$ and $\pi_A$ share a point $Q$. By \eqref{lem_twoineverypentagon_a}, the point $Q$ lies in a pentagon of $U$, and thus, again by \eqref{lem_twoineverypentagon_a}, $P$ lies in a pentagon of $U$.
  \item In view of \eqref{lem_twoineverypentagon_b} we may assume that $P=A$ and in view of \eqref{lem_twoineverypentagon_a} we may assume that $Q = B$. If $R=Q$, the statement is trivial. Otherwise, consider a common point $T$ of $\pi_R$ and $\pi_C$. As $P$, $Q$, $C$ are non-collinear, then $\{P, Q, C, T, R\}$ is a pentagon.
  \end{enumerate}
\end{proof}

Let $M$ be a $4$-dimensional subspace of $\PG(n,q)$.

\begin{lemma}\label{kor_bound_qp1_points_plane}
  Let $P$ be a $(q+1)$-$M$-point. Then $\pi_P$ contains at most $q+2$ $(q+1)$-$M$-points.
\end{lemma}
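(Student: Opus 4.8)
The plan is to bound the number of these points by the number of solids of $M$ passing through the plane $\pi_P$, a quantity that is small precisely because $M$ is $4$-dimensional. I would first fix the local picture at $P$: as $P$ is a $(q+1)$-$M$-point, all $q+1$ lines of $\scrL_P$ lie in $M$, so $\pi_P \subseteq M$; and since $\pi_P$ is a plane carrying $q+1$ lines of $\scrL$ through $P$, the set $\scrL_P$ is exactly the pencil of all lines of $\pi_P$ on $P$. Consequently $PQ \in \scrL$ for every point $Q \in \pi_P$ with $Q \neq P$.

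Next I would examine one further $(q+1)$-$M$-point $Q \in \pi_P$ with $Q \neq P$; here $\pi_Q \subseteq M$ and $PQ \in \scrL_Q$. I claim $\pi_Q \neq \pi_P$: were they equal, every line of $\pi_P$ on $Q$ would lie in $\scrL$, and any point $Y \in \pi_P \setminus PQ$ would yield $PY, QY, PQ \in \scrL$, a $3$-gon $\{P, Q, Y\}$, contradicting Lemma \ref{lem_no3or4gons}. Hence $\pi_P \cap \pi_Q = PQ$ and $S_Q := \langle \pi_P, \pi_Q \rangle$ is a solid containing $\pi_P$.

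The decisive step is an injectivity argument. Writing $Y$ for the set of $(q+1)$-$M$-points of $\pi_P$ distinct from $P$, I would assign to each $Q \in Y$ the solid $S_Q$ and show this assignment is injective. If $S_Q = S_R$ for distinct $Q, R \in Y$, then that common solid contains $\pi_P$, $\pi_Q$ and $\pi_R$, hence the three pencils $\scrL_P$, $\scrL_Q$, $\scrL_R$; since any two of $P, Q, R$ lie on at most one common line of $\scrL$, inclusion--exclusion forces at least $3(q+1) - 3 = 3q$ lines of $\scrL$ into a single solid, which exceeds the maximum of $2q+1$ permitted by (Sd) as soon as $q \geq 2$. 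Thus $Q \mapsto S_Q$ is injective. Finally, the solids of the $4$-space $M$ through the fixed plane $\pi_P$ are in bijection with the points of the quotient line $M/\pi_P \cong \PG(1, q)$, of which there are $q+1$; therefore $|Y| \leq q+1$, and $\pi_P$ contains at most $|Y| + 1 \leq q+2$ of the $(q+1)$-$M$-points.

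I expect the injectivity step to be the crux, since it is the only place condition (Sd) enters and it should be argued without splitting into the collinear and non-collinear cases for $P, Q, R$. Retaining only the bound ``at least $3q$ lines'' (rather than an exact count) lets one estimate cover both configurations uniformly, so the remaining passage to the quotient, and hence the final count, is routine. The one point to watch is to confirm that $S_Q$ is genuinely $3$-dimensional (which follows from $\pi_P \cap \pi_Q = PQ$ being a line) so that (Sd) applies to it.
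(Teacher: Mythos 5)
Your proof is correct and takes essentially the same route as the paper's: assign to each further $(q+1)$-$M$-point $Q$ of $\pi_P$ the solid $\langle \pi_P, \pi_Q \rangle$, show this assignment is injective using (Sd), and count the $q+1$ solids of the $4$-space $M$ through $\pi_P$. The paper compresses the injectivity step into ``different such points give rise to different solids by (Sd)''; your inclusion--exclusion count of at least $3(q+1)-3 = 3q > 2q+1$ lines in a common solid is exactly the intended justification.
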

\begin{proof}
  For a $(q+1)$-$M$-point $X \neq P$ of $\pi_P$, the planes $\pi_P$ and $\pi_X$ share the line $PX$ and thus span a solid of $M$. Different such points $X$ give rise to different solids by (Sd). As $\pi_P$ lies on only $q+1$ solids of $M$, the assertion follows.
\end{proof}

\begin{kor}\label{kor_cases_q2plane}
  Let $P$ be a $(q+1)$-$M$-point. Then one of the following cases occurs:
  \begin{enumerate}[(a)]
    \item Every line on $P$ in $\pi_P$ contains exactly one $(q+1)$-$M$-point other than $P$.\label{kor_cases_q2plane_a}
    \item There exists one line on $P$ in $\pi_P$ with no $(q+1)$-$M$-point other than $P$.\label{kor_cases_q2plane_b}
  \end{enumerate}
\end{kor}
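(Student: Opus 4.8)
The plan is to obtain the dichotomy purely by counting, with all the real content supplied by Lemma~\ref{kor_bound_qp1_points_plane}. First I would record the two structural facts that make the count work. Since $P$ is a $(q+1)$-$M$-point, the plane $\pi_P$ carries all $q+1$ lines of $\scrL_P$, so by (Pl) it is a $(q+1)$-plane and $\scrL_P$ is \emph{exactly} its set of lines of $\scrL$; as $\pi_P$ is a plane of order $q$, these are precisely the $q+1$ lines on $P$ in $\pi_P$. Secondly, every point $Q \neq P$ of $\pi_P$ lies on a unique line on $P$ in $\pi_P$, namely $PQ$. Hence the $(q+1)$-$M$-points of $\pi_P$ other than $P$ are distributed among the $q+1$ lines on $P$, each such point sitting on exactly one of them, and two distinct lines on $P$ share only $P$.

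Next I would invoke Lemma~\ref{kor_bound_qp1_points_plane}, which bounds the number of $(q+1)$-$M$-points in $\pi_P$ by $q+2$; since $P$ is one of them, there are at most $q+1$ $(q+1)$-$M$-points other than $P$. Now comes the pigeonhole step. If some line on $P$ in $\pi_P$ carries no $(q+1)$-$M$-point besides $P$, then case (b) holds and we are done. Otherwise each of the $q+1$ lines on $P$ carries at least one further $(q+1)$-$M$-point, and because distinct lines on $P$ meet only in $P$, these yield at least $q+1$ \emph{distinct} $(q+1)$-$M$-points other than $P$. Comparing with the upper bound $q+1$ forces equality, so each line carries exactly one, which is case (a).

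I expect no genuine obstacle: once Lemma~\ref{kor_bound_qp1_points_plane} is in hand the corollary is a one-line counting argument. The only subtle point worth stating explicitly is that we never need to rule out a line on $P$ carrying two or more further $(q+1)$-$M$-points; in the ``every line hit'' branch the matched lower and upper bounds of $q+1$ automatically exclude this and pin down the exactly-one-per-line configuration of case (a), while case (b) imposes nothing on the remaining lines. Thus the two cases are in fact mutually exclusive and exhaustive, so ``one of them occurs'' holds.
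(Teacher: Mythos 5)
Your proof is correct and is essentially the paper's argument: the paper also deduces the dichotomy from Lemma \ref{kor_bound_qp1_points_plane} by the same pigeonhole count (it states the contrapositive, ``if (a) fails then (b) holds,'' while you argue ``if (b) fails then (a) holds,'' which is the same thing). The extra observations you make --- that the $q+1$ lines of $\pi_P$ through $P$ are exactly the lines of $\scrL_P$, and that distinct lines on $P$ meet only in $P$ --- are correct and harmless, just more explicit than the paper's one-line proof.
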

\begin{proof}
  If the first assertion is false, then the second holds by Lemma \ref{kor_bound_qp1_points_plane}.
\end{proof}

\begin{prop}\label{kor_bound_qp1_points}
  If $M$ contains at least one $(q+1)$-$M$-point, then one of the following cases occurs:
  \begin{enumerate}[(a)]
   \item one line in $\scrL_M$ contains exactly one $(q+1)$-$M$-point,
   \item $q=2$, $|\scrL_M| \geq 15$, and there exists a line in $\scrL_M$ with exactly two $(q+1)$-$M$-points.
  \end{enumerate}
\end{prop}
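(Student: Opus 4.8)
The plan is to collapse the problem into a single strongly regular graph and then apply the eigenvalue criterion quoted in Section \ref{sec_def}. Fix a $(q+1)$-$M$-point, which exists by hypothesis, and apply Corollary \ref{kor_cases_q2plane} to it and to every other $(q+1)$-$M$-point. If the second alternative of Corollary \ref{kor_cases_q2plane} occurs for some $(q+1)$-$M$-point $Q$, then the corresponding line through $Q$ lies in $\scrL_Q \subseteq \scrL_M$ and carries no $(q+1)$-$M$-point besides $Q$; this is exactly alternative (a) of the Proposition, and we are done. So I would assume henceforth that the first alternative of Corollary \ref{kor_cases_q2plane} holds for \emph{every} $(q+1)$-$M$-point. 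Since the $q+1$ lines of $\scrL_P$ are coplanar and therefore fill all $q+1$ lines through $P$ in the plane $\pi_P$, this assumption says that every line of $\scrL_M$ incident with a $(q+1)$-$M$-point carries exactly two of them.

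Next I would build a graph $\scrG$ whose vertex set $\scrT$ is the set of $(q+1)$-$M$-points, two of them being adjacent when they lie on a common line of $\scrL_M$. By the previous paragraph $\scrG$ is $(q+1)$-regular: each $P \in \scrT$ lies on the $q+1$ lines of $\scrL_P$, and each of these contains exactly one further vertex. Using that $\scrL$ has no $3$- or $4$-gons (Lemma \ref{lem_no3or4gons}) together with the fact that every relevant line carries only two vertices, a short check shows that $\scrG$ contains no triangle and no $4$-cycle, i.e. $\scrG$ has girth at least $5$. In particular adjacent vertices have no common neighbour and non-adjacent vertices have at most one.

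The decisive step, and the place where $\dim M = 4$ enters, is to show that $\scrG$ has diameter $2$, so that non-adjacent vertices have \emph{exactly} one common neighbour. Given two non-adjacent vertices $X, Y$, their planes $\pi_X, \pi_Y$ are distinct (otherwise, by Lemma \ref{kor_bound_qp1_points_plane}, $Y$ would be one of the $(q+1)$-$M$-points of $\pi_X$ and hence adjacent to $X$), so inside the $4$-space $M$ they must meet in a point $Z \neq X, Y$. The crux is that every line through $X$ in $\pi_X$ already belongs to $\scrL_X$, whence $XZ \in \scrL_M$, and likewise $YZ \in \scrL_M$; these two lines are distinct, for otherwise $X$ and $Y$ would share a line of $\scrL_M$ and be adjacent. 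Then $\scrL_Z$ spans the plane $\langle XZ, YZ \rangle \subseteq M$, which forces $\pi_Z \subseteq M$ and hence makes $Z$ a $(q+1)$-$M$-point, i.e. a common neighbour of $X$ and $Y$. Consequently $\scrG$ is a strongly regular graph with $\lambda = 0$, $\mu = 1$, valency $q+1$, and therefore $(q+1)^2 + 1$ vertices.

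Finally I would feed these parameters into the eigenvalue criterion of Section \ref{sec_def}. For $q \geq 2$ the parameters $((q+1)^2+1,\, q+1,\, 0,\, 1)$ are never of the excluded type $(4\mu+1, 2\mu, \mu-1, \mu)$, so the eigenvalues $\tfrac12\bigl(1 \pm \sqrt{4q+1}\bigr)$ are integers; thus $\sqrt{4q+1} \in \setZ$, which gives $q = t(t+1)$ for some integer $t \geq 1$. Since consecutive integers are coprime, the only prime power of this shape is $q = 2$. For $q = 2$ the graph $\scrG$ is the Petersen graph on $10$ vertices with its $15$ edges, each edge being a line of $\scrL_M$ carrying exactly two $(q+1)$-$M$-points; hence $|\scrL_M| \geq 15$, which is precisely alternative (b). I expect the common-neighbour argument in the diameter-$2$ step to be the main obstacle, as it is the only point where the embedding in a $4$-dimensional space is genuinely exploited and where one must verify that the forced intersection point $Z$ is itself a $(q+1)$-$M$-point.
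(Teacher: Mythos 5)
Your proof is correct and follows essentially the same route as the paper: reduce to the case where every line of $\scrL_M$ through a $(q+1)$-$M$-point carries exactly two of them, form the strongly regular graph with parameters $((q+1)^2+1,\,q+1,\,0,\,1)$, and apply the eigenvalue integrality condition to force $q=2$ and the Petersen graph with its $15$ edges. The only deviation is the diameter-$2$ step, where the paper invokes Lemma \ref{lem_twoineverypentagon}(c) while you argue directly that $\pi_X$ and $\pi_Y$ meet in a point $Z$ of the $4$-space $M$ which is then forced to be a $(q+1)$-$M$-point adjacent to both; both arguments work.
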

\begin{proof}
By Corollary \ref{kor_cases_q2plane}, if one line of $\scrL_M$ does not contain exactly none or two $(q+1)$-$M$-points, then one line of $\scrL_M$ contains exactly one $(q+1)$-$M$-point.

Hence consider the case that each line of $\scrL_M$ with $(q+1)$-$M$-points contains exactly none or two $(q+1)$-$M$-points. Let 
  \begin{align*}
   \scrL_M' := \{ \ell \in \scrL_M ~|~ \ell \text{ contains a $(q+1)$-$M$-point} \},
  \end{align*}
  and let $\scrP_M'$ be the set of $(q+1)$-$M$-points. Consider the strongly regular graph $\scrG := (\scrP_M',\scrL_M', \in)$ with $k=q+1$, $\lambda=0$, $\mu=1$, girth $5$ (by Lemma \ref{lem_no3or4gons}) and diameter $2$ (by Lemma \ref{lem_twoineverypentagon} (c)). Counting the vertices of $\scrG$ by their distance to a fixed vertex shows
  \begin{align*}  
   |\scrP_M'| = 1 + (q+1) + (q+1)q = 2+2q+q^2.
  \end{align*}
  Hence $\scrG$ has the parameters $(v, k, \lambda, \mu) = (2+2q+q^2, q+1, 0, 1)$ and $|\scrL_M'| = \frac{(q+1)|\scrP_M'|}{2} = \frac{1}{2} q^3 + \frac{3}{2} q^2 + 2q + 1$. Here $k = q+1 \neq 2 = 2\mu$, so $\scrG$ is not a conference graph. Thus the eigenvalues
  \begin{align*}
    \frac{1}{2}\left( -1 \pm \sqrt{1+4q} \right)
  \end{align*}
  of the graph have to be integers. Hence $1+4q$ has to be an odd square. This condition restricts the possible values of $q$ to $u(u+1)$ for a natural number $u$. Obviously, $q=2$ is the only prime power satisfying this condition.
\end{proof}

\begin{prop}\label{kor_scrl_pentagon_kor1}
  If $\scrL$ contains a $5$-gon, then there exists a $5$-dimensional subspace $H$ with
  \begin{align*}
    |\scrL_H| \geq q^4-q^3+3q^2+2q+1.
  \end{align*}
\end{prop}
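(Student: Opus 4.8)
The plan is to start from the pentagon guaranteed by the hypothesis, span it to a $4$-dimensional subspace $U$, and then enlarge $U$ by a single line $\ell \in \scrL$ sticking out of it, so that $H := \langle U, \ell \rangle$ is $5$-dimensional. By Lemma \ref{lem_dimU} the subspace $U$ is indeed $4$-dimensional, and by Proposition \ref{lem_border4d} we already know $|\scrL_U| \geq q^3 - q^2 + 4q + 1$. The key observation is that the union counted in the proof of Proposition \ref{lem_expand}, namely $\bigcup_{\ell^M} \scrL_{\langle \ell, \ell^M \rangle}$, consists only of lines lying in $\langle U, \ell \rangle$; hence, read with $M = U$, that argument actually bounds $|\scrL_H|$ from below, not merely $|\scrL|$. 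So it suffices to locate a suitable $\ell$ and feed $|\scrL_U| \geq q^3 - q^2 + 4q + 1$ into the estimate of Proposition \ref{lem_expand}(b).

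The delicate point is the choice of $\ell$, because the loss term in Proposition \ref{lem_expand}(b) is $\alpha q(q-1)$, where $\alpha$ is the number of $(q+1)$-$U$-points on the unique line $s \in \scrL_U$ met by $\ell$. A short computation shows that only $\alpha \leq 1$ produces the required bound: with $\alpha = 1$ one gets $q|\scrL_U| - q^2 + q + 1 \geq q^4 - q^3 + 3q^2 + 2q + 1$ exactly. First I would note that $U$ contains a pentagon and a $(q+1)$-$U$-point (each pentagon vertex $V$ is one, since both incident pentagon edges, and hence all of $\pi_V$, lie in $U$), so Proposition \ref{kor_bound_qp1_points} applies. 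In its main case there is a line $s \in \scrL_U$ carrying exactly one $(q+1)$-$U$-point. I then pick a point $X \in s$ that is not a $(q+1)$-$U$-point; such $X$ exists because $s$ has $q+1 \geq 3$ points and only one of them is a $(q+1)$-$U$-point. Being a non-$(q+1)$-$U$-point, $X$ lies on some $\ell \in \scrL \setminus \scrL_U$, and since $\ell \nsubseteq U$ while $X \in \ell \cap U$, the line $\ell$ meets $U$ exactly in $X$ and meets $s$ there. Applying Proposition \ref{lem_expand}(b) with $\alpha = 1$ then yields $|\scrL_H| \geq q^4 - q^3 + 3q^2 + 2q + 1$, as desired.

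What remains is the exceptional branch of Proposition \ref{kor_bound_qp1_points}, namely $q = 2$, where one may only find a line $s$ with exactly two $(q+1)$-$U$-points (so $\alpha = 2$), but where in compensation $|\scrL_U| \geq 15$. Here I would run the same construction: $s$ still has a third point $X$ that is not a $(q+1)$-$U$-point, hence a line $\ell \not\subseteq U$ through it, and I would verify the arithmetic directly, since Proposition \ref{lem_expand}(b) now gives $|\scrL_H| \geq 2\cdot 15 - 2\cdot 4 + 2\cdot 2 + 1 = 27 \geq 25 = q^4 - q^3 + 3q^2 + 2q + 1$. The main obstacle is therefore not the expansion estimate itself but forcing $\alpha \leq 1$ in the generic case; this is precisely what the strongly-regular-graph dichotomy of Proposition \ref{kor_bound_qp1_points} delivers, with $q = 2$ isolated as the only genuinely separate case.
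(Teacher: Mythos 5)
Your proposal is correct and follows essentially the same route as the paper: invoke Proposition \ref{kor_bound_qp1_points} to find a line $s \in \scrL_U$ with $\alpha \leq 1$ (or $\alpha \leq 2$ when $q=2$, compensated by $|\scrL_U| \geq 15$), then feed Proposition \ref{lem_border4d} into Proposition \ref{lem_expand}(b). Your explicit construction of the transversal line $\ell$ through a non-$(q+1)$-$U$-point of $s$, and your remark that the union estimated in Proposition \ref{lem_expand} lies entirely in $\langle U, \ell \rangle$ so that the bound really applies to $|\scrL_H|$, are details the paper leaves implicit, and they are both right.
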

\begin{proof}
  In the first case of Proposition \ref{kor_bound_qp1_points} there exists a line $s \in \scrL_U$ with at most one $(q+1)$-$U$-point. Hence $\alpha \leq 1$ in Proposition \ref{lem_expand}. Thus by Proposition \ref{lem_border4d}
  \begin{align*}
    |\scrL_H| \geq q|\scrL_U| -q^2+q+1 \geq q^4-q^3+3q^2+2q+1.
  \end{align*}
  In the second case of Proposition \ref{kor_bound_qp1_points} $q=2$,  and there exists a line $s \in \scrL_U$ with at most two $(q+1)$-$U$-points and $|\scrL_U| \geq 15$. Hence $\alpha \leq 2$ in Proposition \ref{lem_expand} yields
  \begin{align*}
    |\scrL_H| \geq q|\scrL_U| - 2q^2+2q+1 = 27 > 25 = q^4-q^3+3q^2+2q+1.
  \end{align*}
\end{proof}

\begin{proof}[Proof of Theorem \ref{main2}]
 By Proposition \ref{kor_scrl_pentagon_kor1} the existence of a pentagon in $\scrL$ implies the existence of a hyperplane that does not satisfy (Hp'). Thus (ii) of the main result of \cite{thasmaldeghem_embeddings_split_caley} implies that $\scrL$ is isomorphic to the line set of the standard embedding of $\tH(q)$ in $\PG(6,q)$. This completes the proof of Theorem \ref{main2}.
\end{proof}

Finally, we want to mention the following observation.

\begin{satz}\label{kor_scrl_pentagon_kor1a}
  If $\scrL$ satisfies (Pt), (Pl), (Sd), and (To), then $\scrL$ is contained in a $6$-dimensional subspace of $\PG(n,q)$.
\end{satz}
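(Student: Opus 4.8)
The plan is to split on whether $\scrL$ contains a pentagon, and to reduce the pentagon case to (To) by iterating the expansion estimate of Proposition \ref{lem_expand}. If $\scrL$ contains no pentagon, then by Lemma \ref{lem_no3or4gons} it contains no $3$-, $4$- or $5$-gon, and the standard counting argument already invoked in the proof of Theorem \ref{main1} (together with (To)) shows that $\scrL$ is the line set of a generalized hexagon of order $q$ flatly and fully embedded in $\PG(n,q)$. By (ii) of the main result of \cite{thasmaldeghem_embeddings_split_caley} this embedding is the natural one, so $\langle \scrL \rangle$ has dimension at most $6$ and we are done.

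Now suppose $\scrL$ contains a pentagon. Proposition \ref{kor_scrl_pentagon_kor1} provides a $5$-dimensional subspace $H$ with $|\scrL_H| \geq q^4-q^3+3q^2+2q+1$; I claim $\dim \langle \scrL \rangle \leq 6$ and assume for contradiction that $\dim \langle \scrL \rangle \geq 7$. The engine is the observation, implicit in the proof of Proposition \ref{lem_expand}, that every line counted there lies in $\langle M, \ell \rangle$, so that in fact
\begin{align*}
 |\scrL_{\langle M, \ell \rangle}| \geq q|\scrL_M| - q^3 + q^2 + 1
\end{align*}
whenever $\ell \in \scrL$ meets the subspace $M$ in exactly one point. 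Applying this with $M = H$ and a line $\ell_1$ meeting $H$ in exactly one point produces a $6$-dimensional subspace $H_1 := \langle H, \ell_1 \rangle$ with $|\scrL_{H_1}| \geq q^5 - q^4 + 2q^3 + 3q^2 + q + 1$; applying it once more with $M = H_1$ and a line $\ell_2$ meeting $H_1$ in exactly one point yields
\begin{align*}
 |\scrL| \geq q|\scrL_{H_1}| - q^3 + q^2 + 1 \geq q^6 - q^5 + 2q^4 + 2q^3 + 2q^2 + q + 1.
\end{align*}
Since $q^6 - q^5 + 2q^4 + 2q^3 + 2q^2 + q + 1 - (q^5+q^4+q^3+q^2+q+1) = q^4(q-1)^2 + q^3 + q^2 > 0$, this contradicts (To), and hence $\dim \langle \scrL \rangle \leq 6$. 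The arithmetic here shows why exactly one expansion beyond $H$ is affordable: starting from $\approx q^4$ lines, each dimension above $5$ that $\scrL$ spans multiplies the count by roughly $q$, and reaching dimension $7$ already overshoots the ceiling $q^5+q^4+q^3+q^2+q+1$.

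What remains, and what I expect to be the main obstacle, is the existence of the two expansion lines $\ell_1$ and $\ell_2$: given a subspace $M$ (here $H$, then $H_1$) with $\scrL \nsubseteq \scrL_M$, I must produce a line of $\scrL$ meeting $M$ in exactly one point. If some point of $\scrP \cap M$ lies on a line $\ell \in \scrL$ with $\ell \nsubseteq M$, then $\ell \cap M$ is a single point and $\ell$ serves; otherwise $\scrL_M$ is a union of connected components of $(\scrP, \scrL)$, and since the pentagon forces $\scrP \cap M \neq \emptyset$, the desired line exists as soon as $(\scrP, \scrL)$ is connected. Connectivity is therefore the crux. If the geometry were disconnected, a line $\ell$ and a line $\ell'$ from two distinct components would be skew, and by (Sd) the solid $\langle \ell, \ell' \rangle$ would carry $q+1$ or $2q+1$ lines of $\scrL$. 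The case $2q+1$ is immediately contradictory: by Lemma \ref{lem_expand1}(b) such a solid is the union of two pencils sharing the line $s \in \scrL$ through their two (distinct) centres, and as $\ell, \ell'$ are skew they lie in different pencils, so $s$ joins a point of $\ell$ to a point of $\ell'$ and thus connects the two supposedly separate components. The genuinely delicate case is a $(q+1)$-solid carrying $q+1$ pairwise skew lines; here I would rule out a cross-component skew pair by exploiting the richness of the pentagon component (its $\gtrsim q^4$ lines force many such solids, which by (Sd) cannot all remain $(q+1)$-solids without violating (Pl) or (To)), but this is the step that requires the most care and is where I expect the real work to lie.
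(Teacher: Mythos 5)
Your skeleton in the connected case is exactly the paper's: split on the existence of a pentagon, and in the pentagon case iterate the expansion estimate of Proposition \ref{lem_expand} twice starting from the hyperplane of Proposition \ref{kor_scrl_pentagon_kor1} (your sharpened form $|\scrL_{\langle M,\ell\rangle}|\geq q|\scrL_M|-q^3+q^2+1$ is indeed what the paper implicitly uses, and your arithmetic checks out). The genuine gap is exactly where you flag it: you need connectivity of $(\scrP,\scrL)$ to produce the expansion lines $\ell_1,\ell_2$, and you do not establish it. Your treatment of the $(2q+1)$-solid case is not a proof: Lemma \ref{lem_expand1}(b) has as hypothesis that $\ell$ already meets a line $s\in\scrL_M$ and that some $\ell^M\in\scrL_M$ meets $s$ in a point, so it says nothing about an arbitrary $(2q+1)$-solid spanned by two skew lines from different components --- for such a pair there is by assumption no line of $\scrL$ meeting both, and nothing in the paper shows that a $(2q+1)$-solid decomposes into two pencils (a priori its lines could, for instance, be pairwise skew without violating (Pl)). The $(q+1)$-solid case you concede outright. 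So the proposal as written does not close.

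The paper never proves connectivity; it sidesteps the issue. In the disconnected case it takes a connected component $\scrL_1$ and $\scrL_2:=\scrL\setminus\scrL_1$ and argues that one can still run the expansion estimate enough times to contradict (To): either $\dim\langle\scrL_1\rangle\geq 7$ and the connected calculation applies inside $\scrL_1$, or some line of $\scrL_2$ meets $\langle\scrL_1\rangle$ in a point (and Proposition \ref{lem_expand} applies to it), or $\langle\scrL_1\rangle$ and $\langle\scrL_2\rangle$ meet in at most a point, in which case $\dim\langle\scrL_2\rangle\geq 4$ by Lemma \ref{lem_dimU} and one gains two further factors of roughly $q$ by expanding with lines of $\scrL_2$. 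If you want to salvage your write-up, replace the attempted connectivity proof by this kind of direct accounting in the disconnected case: the point is not that disconnectedness is impossible, but that a second component costs so many additional lines (or so much additional dimension) that (To) is violated anyway.
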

\begin{proof}
  If $\scrL$ contains no $5$-gon, then $\scrL$ is the line set of a standard embedding of the split Cayley hexagon by the main result of \cite{thasmaldeghem_embeddings_split_caley}. 
  Hence consider the remaining case that $\scrL$ contains a $5$-gon. Suppose that $\scrL$ is not contained in a $6$-dimensional subspace. If $\scrL$ is connected, applying Proposition \ref{lem_expand} twice with $\alpha=q$ contradicts (To). More precisely, let $U$ be a $4$-dimensional subspace with a pentagon, $V$ a $5$-dimensional subspace with $U \subseteq V$, and $W$ a $6$-dimensional subspace with $V \subseteq W$. By Proposition \ref{kor_scrl_pentagon_kor1}, $\scrL_V \geq q^4-q^3+3q^2+2q+1$. Hence
  \begin{align*}
    |\scrL| &\geq q|\scrL_W| - q^3+q^2+1\\
      &\geq q(q|\scrL_V|-q^2+q+1) - q^3+q^2+1\\
      &\geq q^6-q^5+2q^2+2q^3+2q^2+q+1 \\
      &> q^5+q^4+q^3+q^2+q+1 \geq |\scrL|.
  \end{align*}
  If $\scrL$ is not connected, let $\scrL_1$ be a connected component of $\scrL$ and $\scrL_2 := \scrL \setminus \scrL_1$. By requirement $\dim(\langle \scrL_1, \scrL_2 \rangle) \geq 7$. If $\dim(\langle \scrL_1 \rangle) \geq 7$, then the previous calculation is applicable. If $\dim(\langle \scrL_1 \rangle) < 7$, then $\scrL_2 \nsubseteq \langle \scrL_1 \rangle$. 
  So we find a line $\ell$ in $\scrL_2$ such that $\ell$ meets $\langle \scrL_1 \rangle$ in a point or $\dim(\langle \scrL_1 \rangle \cap \langle \scrL_2 \rangle) =0$. In the first case, we can repeat the same calculation as before. In the second case, $\dim(\scrL_2) \geq 4$ by Lemma \ref{lem_dimU}. Hence, we find lines $\ell_1, \ell_2 \in \scrL_2$ such that we can repeat the estimate of Proposition \ref{lem_expand} with $\alpha = q$ at least two additional times: first for $\langle \scrL_1, P \rangle$ and $\ell_1$, where $P$ is a point of $\ell_1$, and then for $\langle \scrL_2, \ell_1 \rangle$ and $\ell_2$. Again, this contradicts (To).
\end{proof}

\section*{Acknowledgement}

The author thanks several anonymous referees and Klaus Metsch for their helpful advice on the presentation of the results.

\bibliographystyle{elsarticle-num}
\bibliography{literatur}

\begin{thebibliography}{10}
\expandafter\ifx\csname url\endcsname\relax
  \def\url#1{\texttt{#1}}\fi
\expandafter\ifx\csname urlprefix\endcsname\relax\def\urlprefix{URL }\fi
\expandafter\ifx\csname href\endcsname\relax
  \def\href#1#2{#2} \def\path#1{#1}\fi

\bibitem{MR1557095}
J.~Tits, \href{http://www.numdam.org/item?id=PMIHES_1959__2__13_0}{Sur la
  trialit\'e et certains groupes qui s'en d\'eduisent}, Inst. Hautes \'Etudes
  Sci. Publ. Math.~(2) (1959) 13--60.
\newline\urlprefix\url{http://www.numdam.org/item?id=PMIHES_1959__2__13_0}

\bibitem{MR2964273}
J.~Bamberg, N.~Durante, \href{http://dx.doi.org/10.1090/conm/579/11514}{Low
  dimensional models of the finite split {C}ayley hexagon}, in: Theory and
  applications of finite fields, Vol. 579 of Contemp. Math., Amer. Math. Soc.,
  Providence, RI, 2012, pp. 1--19.
\newblock \href {http://dx.doi.org/10.1090/conm/579/11514}
  {\path{doi:10.1090/conm/579/11514}}.
\newline\urlprefix\url{http://dx.doi.org/10.1090/conm/579/11514}

\bibitem{MR549937}
P.~J. Cameron, W.~M. Kantor,
  \href{http://dx.doi.org/10.1016/0021-8693(79)90090-5}{{$2$}-transitive and
  antiflag transitive collineation groups of finite projective spaces}, J.
  Algebra 60~(2) (1979) 384--422.
\newblock \href {http://dx.doi.org/10.1016/0021-8693(79)90090-5}
  {\path{doi:10.1016/0021-8693(79)90090-5}}.
\newline\urlprefix\url{http://dx.doi.org/10.1016/0021-8693(79)90090-5}

\bibitem{hirschfeld1991general}
J.~Hirschfeld, J.~Thas, General Galois geometries, Oxford mathematical
  monographs, Clarendon Press, 1991.

\bibitem{MR2663559}
S.~De~Winter, J.~Schillewaert,
  \href{http://dx.doi.org/10.1007/s00493-010-2441-2}{Characterizations of
  finite classical polar spaces by intersection numbers with hyperplanes and
  spaces of codimension 2}, Combinatorica 30~(1) (2010) 25--45.
\newblock \href {http://dx.doi.org/10.1007/s00493-010-2441-2}
  {\path{doi:10.1007/s00493-010-2441-2}}.
\newline\urlprefix\url{http://dx.doi.org/10.1007/s00493-010-2441-2}

\bibitem{MR2927612}
J.~Schillewaert, J.~A. Thas, H.~Van~Maldeghem,
  \href{http://dx.doi.org/10.1007/s00026-012-0136-7}{A characterization of the
  finite {V}eronesean by intersection properties}, Ann. Comb. 16~(2) (2012)
  331--348.
\newblock \href {http://dx.doi.org/10.1007/s00026-012-0136-7}
  {\path{doi:10.1007/s00026-012-0136-7}}.
\newline\urlprefix\url{http://dx.doi.org/10.1007/s00026-012-0136-7}

\bibitem{MR2128336}
J.~A. Thas, H.~Van~Maldeghem,
  \href{http://dx.doi.org/10.1007/s10623-004-4860-9}{Some characterizations of
  finite {H}ermitian {V}eroneseans}, Des. Codes Cryptogr. 34~(2-3) (2005)
  283--293.
\newblock \href {http://dx.doi.org/10.1007/s10623-004-4860-9}
  {\path{doi:10.1007/s10623-004-4860-9}}.
\newline\urlprefix\url{http://dx.doi.org/10.1007/s10623-004-4860-9}

\bibitem{MR2860661}
J.~A. Thas, H.~Van~Maldeghem,
  \href{http://dx.doi.org/10.1007/s00022-011-0091-1}{Characterizations of
  {V}eronese and {S}egre varieties}, J. Geom. 101~(1-2) (2011) 211--222.
\newblock \href {http://dx.doi.org/10.1007/s00022-011-0091-1}
  {\path{doi:10.1007/s00022-011-0091-1}}.
\newline\urlprefix\url{http://dx.doi.org/10.1007/s00022-011-0091-1}

\bibitem{MR2515274}
J.~A. Thas, H.~Van~Maldeghem, 1-polarized pseudo-hexagons, Innov. Incidence
  Geom. 6/7 (2007/08) 307--325.

\bibitem{MR1633172}
H.~Cuypers, A.~I. Steinbach,
  \href{http://dx.doi.org/10.1515/jgth.1998.015}{Weak embeddings of generalized
  hexagons and groups of type {$G_2$}}, J. Group Theory 1~(3) (1998) 225--236.
\newblock \href {http://dx.doi.org/10.1515/jgth.1998.015}
  {\path{doi:10.1515/jgth.1998.015}}.
\newline\urlprefix\url{http://dx.doi.org/10.1515/jgth.1998.015}

\bibitem{maldeghem_splitcaley_by_intersect}
J.~A. Thas, H.~Van~Maldeghem,
  \href{http://dx.doi.org/10.1016/j.ejc.2007.06.008}{A characterization of the
  natural embedding of the split {C}ayley hexagon {$H(q)$} in {${\rm PG}(6,q)$}
  by intersection numbers}, European J. Combin. 29~(6) (2008) 1502--1506.
\newblock \href {http://dx.doi.org/10.1016/j.ejc.2007.06.008}
  {\path{doi:10.1016/j.ejc.2007.06.008}}.
\newline\urlprefix\url{http://dx.doi.org/10.1016/j.ejc.2007.06.008}

\bibitem{grassmann_embeddings_pasini_cardinali}
A.~Pasini, I.~Cardinali, Groups and Geometries (ed. {N.S.N. Sastry}), Springer
  Proceedings in Mathematics, Springer-Verlag, in press, Ch. Embeddings of
  Line-grassmannians of Polar Spaces in {G}rassmann Varieties.

\bibitem{maldeghem_generalizedpolygons}
H.~{van Maldeghem}, Generalized Polygons, Birkhäuser Basel, 1998.

\bibitem{thasmaldeghem_embeddings_split_caley}
J.~A. Thas, H.~{Van Maldeghem}, Flat lax and weak embeddings of finite
  generalized hexagons, European J. Combin. 19 (1998) 733 -- 751.

\bibitem{brouwer1989distance}
A.~Brouwer, A.~Cohen, A.~Neumaier, Distance-regular graphs, Ergebnisse der
  Mathematik und ihrer Grenzgebiete, Springer, 1989.

\end{thebibliography}

\appendix

\end{document}